\newcommand{\C}{{\mathbb{C}}}          
\newcommand{\Proj}{{\mathbb{P}}}        
\newcommand{\R}{{\mathbb{R}}}          
\newcommand{\Z}{{\mathbb{Z}}}          
\newcommand{\Sphere}{\mathbb{S}}
\newcommand{\Hyperbolic}{\mathbb{H}}
\newcommand{\fraku}{{\mathfrak{u}}}
\newcommand{\frakv}{{\mathfrak{v}}}
\newcommand{\XIS}{{\mathfrak{X}}}
\newcommand{\SO}{{\mathrm{SO}}}
\newcommand{\SU}{{\mathrm{SU}}}
\newcommand{\lrr}{\longrightarrow}
\newcommand{\na}{{\nabla}}
\newcommand{\End}[1]{{\mathrm{End}}\,{#1}}
\newcommand{\deter}[1]{{\mathrm{det}}\,{#1}}
\newcommand{\dx}{{\mathrm{d}}}
\newcommand{\papa}[2]{\frac{\partial#1}{\partial#2}}
\newcommand{\vol}{{\mathrm{vol}}}
\newcommand{\ric}{{\mathrm{Ric}\,}}
\newcommand{\estrela}{{\boldsymbol{\star}}}
\newtheorem{teo}{Theorem}
\newtheorem{lemma}{Lemma}
\newtheorem{coro}{Corollary}
\newtheorem{prop}{Proposition}
\newenvironment{Rema}[1][Remark.]{\begin{trivlist}
\item[\hskip \labelsep {\bfseries #1}]}{\end{trivlist}}
\def\cyclic{\mathop{\kern0.9ex{{+}
\kern-2.2ex\raise-.28ex\hbox{\Large\hbox{$\circlearrowright$}}}}\limits}
\title{On the volume of a unit vector field in 3 dimensions via calibrations}
\author{Rui Albuquerque}
\begin{document}


\maketitle


\begin{abstract}

We give a new proof of the well-known result that the minimal volume vector fields on $\Sphere^3(r)$ are the Hopf vector fields. Such proof relies again on calibration theory, arising here from a systematic point of view given by a natural source of differential forms. Our results serve in particular for all $r$.

A classification of relevant calibrations on $T^1M$ for every oriented 3-manifold $M$ of constant sectional curvature is given, continuing the study of the \textit{usual} fundamental differential system of Riemannian geometry. Showing applications of this differential system is also one of the purposes of this article.

We deduce new properties of the geodesic flow vector field of space forms, which interacts with the solutions of the minimal volume problem both in elliptic and hyperbolic geometry, in any dimension. The solution -- unknown -- for the hyperbolic case in 3-dimensions being most dependent on the homology class of the domain and boundary values of the vector fields. This is illustrated with a noteworthy example which ironically works just for curvature $-1$.

\end{abstract}


\
\vspace*{5mm}\\
{\bf Key Words:} vector field; minimal volume; calibration; 3-manifold; geodesic flow.
\vspace*{2mm}\\
{\bf MSC 2020:} Primary: 53C20, 53C35, 53C38,  37D40; Secondary: 53C17, 58A15

\vspace*{6mm}

\markright{\sl\hfill  Rui Albuquerque \hfill}

\setcounter{section}{1}

\begin{center}
\subsection*{{1 -- Introduction}}
\end{center}

\begin{center}
 \textsc{1.1 -- A well-known result}
\end{center}

In the article \cite{GluckZiller}, H.~Gluck and W.~Ziller applied the theory of calibrations to determine the unit vector fields of minimal volume defined on the unit sphere $\Sphere^3$.

Given a Riemannian manifold $(M,g)$ of dimension $m$, the volume of a unit-norm vector field $X\in\XIS_M$ is defined
as $\vol(X)=\vol(M,X^*g^S)$. This is, the volume of the embedded submanifold $X(M)$ of the unit
tangent sphere bundle of $M$ endowed with the Sasaki metric $g^S$, and as long as the real quantity
exists. It is well-known that
\begin{equation}  \label{Definition_volume}
 \begin{split}
&\vol(X)=\int_M\sqrt{\deter(1+(\na X)^t\na X)}\,\vol =\int_M\biggl(1+\sum_{j=1}^m\|\na_{e_j}X\|^2 \biggr. \\
&\qquad\quad \biggl.+\sum_{j_1<j_2}\|\na_{e_{j_1}}X\wedge\na_{e_{j_2}}X\|^2+\cdots
+\sum_{j_1<\cdots<j_{m-1}}\|\na_{e_{j_1}}X\wedge\cdots\wedge\na_{e_{j_{m-1}}}X\|^2\biggr)^{\frac{1}{2}}\,\vol
\end{split}
\end{equation}
where $\{e_1,\ldots,e_m\}$ is an orthonormal local frame on $M$, cf. \cite{BritoChaconNaveira, GilMedranoLlinaresFuster,GluckZiller}.

The celebrated article describes the construction of a calibration 3-form $\varphi$ on the total
space of the unit tangent sphere bundle $\pi:T^1\Sphere^3\lrr \Sphere^3$, over the unit radius
3-sphere. Sections of the bundle correspond to embedded submanifolds and it is observed that
non-vanishing vector fields define a unique homology class. Any Hopf vector field on the 3-sphere
turns out to be a calibrated submanifold for $\varphi$ and therefore a minimal volume vector field
in its homology class (\cite{HarLaw}).

Let us describe the essential steps taken in \cite{GluckZiller} for the construction of the 3-form
$\varphi$. First, the authors recall the circle bundle
\begin{equation}
\begin{split}
 & \mathrm{V}_2(\R^4)=T^1\Sphere^3\lrr\mathrm{Gr}_2(\R^4) \\
 &\qquad\qquad (x,y)\longmapsto x\wedge y
\end{split}
\end{equation}
mapping the Stiefel manifold of pairs of orthogonal units $(x,y)$ to the Grassmannian of oriented
planes. The latter is the quadric $z_1^2+\cdots+z_4^2=0$ in $\C\Proj^3$, a K\"ahler surface embedded
as $z_i=x_i+\sqrt{-1}y_i$. The vertical direction of the bundle induces a 1-form $\theta$, dual to a
horizontal direction $e_0$ of the fibration $T^1\Sphere^3\lrr\Sphere^3$. We shall see next that this
corresponds with the velocity of the geodesic flow and thus integrates precisely to the Hopf
fibration.

We see in the original paper that $\theta$ is a factor of the 3-form $\varphi$. It is also recalled
that $\mathrm{Gr}_2(\R^4)=\Sphere^2\times \Sphere^2$ isometrically, with both spheres of radius
$\sqrt{2}$.

Finally, taking the pullback of the two K\"ahler forms from $\Sphere^2$, one finds two
linearly independent 2-forms $\tau_1$ and $\tau_2$, necessarily closed forms. These are essential to the definition of $\varphi$:
\[  \varphi=\theta\wedge(\tau_1-\tau_2) . \]
The contact 1-form $\theta$ and the two orthogonal closed 2-forms on $T^1\Sphere^3$
are invariant under the isotropy subgroup $\SO(2)\times\SO(2)$ of the action of
$\SO(4)\times\SO(2)$ on the 5-dimensional space.

However, we discover that the decomposition of the 4-dimensional $e_0^\perp=\ker\theta$ into two
2-planes is not the decomposition into `horizontal plus vertical' of the Sasaki metric, induced by
the Levi-Civita connection of $\Sphere^3$. It so happens that this is the source of confusion
with the involved natural exterior differential systems on $T^1\Sphere^3$ and thus it is our aim to clarify the problem.

\begin{center}
 \textsc{1.2 -- New motivation}
\end{center}

The above solution seems to be and in fact is very particular to the 3-sphere. First, already it is proved in \cite{GluckZiller} that the method does not work for $\Sphere^5$ and above, 
even though it is observed  that a calibration on any $T^1\Sphere^{2n+1}$ could also be defined,
this time invariant under $\SO(2n+2)\times\SO(2)$. Secondly, besides the presence of a rank 1
symmetric space, implying 2-point homogeneous action, it is known the geodesic spray of a given
Riemannian manifold acts by isometries (on the unit tangent bundle) if and only if the manifold has constant sectional curvature 1, \cite[p.345]{Chavel}. As we shall prove again below, the geodesic spray is directly related to the Hopf vector fields.

Furthermore, sectional curvature 1 is equivalent to the general Riemannian fibration induced by the geodesic spray and Sasaki metric being of K\"ahler base, a result due to Y.~Tashiro, cf. \cite{Alb2019b,Alb2020a}. In other words, equivalent to the canonical tangent almost contact structure on $T^1\Sphere^m$ verifying the conditions of a Sasakian manifold.

The relation between radius $r$ on the base and radius 1 on the fibre is only considered in \cite{BorGil2006}. Certain stability results are deduced depending on the curvature of the base, although again just with $\dim\geq 5$.

As it is well known, the action of $\SO(4)$ on $\Sphere^3$ by isometries is also transitive and
isometric when lifted to the symmetric space $(T^1\Sphere^3,g^S)$ as the differential map. For
$(x,y)\in V_2(\R^4)$ and $g\in\SO(4)$, we have $g(x,y)=(gx,gy)$, which clearly commutes with
the geodesic flow, the $\SO(2)$ action of the great circles, as referred in \cite{GluckZiller}:
\begin{equation}\label{geodesic_flow_on_Sm}
 g_t(x,y)=(x\cos t+y\sin t,-x\sin t+y\cos t)  .
\end{equation}

This last result is true in any dimension and we prove it later, in Theorem \ref{Theorem_wideraction}, in the wider context of constant sectional curvature $c\neq0$. Indeed there exists a similar description of the geodesic flow for the hyperbolic metric, which \textit{is not} found in the literature:
\begin{equation}
  g_t(x,y)=(x\cosh t+y\sinh t,x\sinh t+y\cosh t) .
\end{equation}
In Theorem \ref{Theorem_velocity_proof} we prove that, both on ${T^1(\Sphere^m(r))}$ and on $T^1(\Hyperbolic^m(r))$, the velocities of those curves $g_t(x,y)$ agree with a multiple by $r$ of the geodesic spray vector field $e_0$.

Finally, since the theory of calibrations is also valid with a fixed boundary volume, that should be relevant for the question of minimality of vector fields on a hyperbolic domain of finite volume.

We start by describing all invariant calibrations on $T^1M$ for 3-di\-men\-sional space forms $M$. This contributes to our study of minimal volume vector fields through calibrated geometry, and its developments, as well as to the knowledge of the so-called fundamental differential system of global $n$-forms on the tangent sphere bundle of any given oriented Riemannian manifold of dimension $(n+1)$, cf. \cite{Alb2018a,Alb2019b}. Within the new applications of such differential system we have the study of unit vector fields.

The contact 1-form $\theta$ referred above is precisely the same for \textit{our} differential system, which is moreover invariant, also, by any lift of any isometry of the base. As it was expected, the above picture within K\"ahler geometry is a particular case of our calibration systems. The assertion that ``there is, up to constant multiple, just one isometry-invariant closed 3-form'', found in \cite[p.178]{GluckZiller}, seems to be justified by the acting subgroup in question.

To great extent we have a new proof of Gluck and Ziller's celebrated result, following Theorem \ref{Theorem_Hopfminimalforanycurvature} below. The general result holds for \textit{most} positive constant sectional curvature manifolds, ie. for all 3-dimensional quotients of $\Sphere^3(r)$ which admit a Hopf vector field. It has the first proof given entirely with calibrated geometry.

\begin{center}

\subsection*{2 -- A different exterior differential system}

\end{center}

\begin{center}
\textsc{2.1 -- The fundamental differential system of Riemannian geometry in dimension 3}
\end{center}

The subtitle refers to the fundamental exterior differential system of Riemannian geometry introduced in \cite{Alb2018a,Alb2019b}, here in the case of dimension 3.

Let $(M,g)$ be an oriented Riemannian 3-manifold. We have the well-known Sasaki metric and the natural contact structure $\theta=e^0$ on the total space of $\pi:T^1M\lrr M$. In the nomenclature of \cite{KatokHasel}, we say $T^1M=\{u\in TM:\ \|u\|^2=1\}$ is a submanifold of contact type of the manifold $TM$ with its canonical symplectic structure induced from $T^*M$ by metric duality.

One proves easily that $TT^1M=\xi^\perp_{|T^1M}$, where $\xi=Be_0$ is the tautological vertical vector field over $TM$. Then we have the splitting of $TTM$ into `horizontal plus vertical' subbundles. The mirror map $B\in\End{TTM}$ is canonical and confirms the structure group of $(T^1M,g^S)$ indeed reduces from $\SO(5)$ to $\SO(2)$.

Let $e_0,e_1,e_2,e_3,e_4,\ e_3=Be_1,\ e_4=Be_2$, be an adapted frame of $TT^1M$, meaning $e_0$ is the geodesic spray, horizontal --- indeed the unique horizontal vector field such that $\dx\pi_u(e_0)=u\in T_{\pi(u)}M$, for all $u\in TM$, --- and meaning $e_1,e_2$ span the horizontal plane bundle $H^\na\cap e_0^\perp$ and their mirror $e_3,e_4$ span the vertical plane bundle, tangent to the fibres $\Sphere^2$ of $\pi$.

The orientation on $T^1M$ is given by $\vol_{T^1M}=e^{01234}$, independent of that one on $M$. Finally, the fundamental differential system is composed of the following 2-forms globally defined on $T^1M$:
\begin{equation} \label{thealphaiwithadaptedframe}
 \dx\theta=e^{31}+e^{42},\qquad\alpha_0=e^{12}, \qquad\alpha_1=e^{14}-e^{23},\qquad\alpha_2=e^{34} .
\end{equation}
Notice we require the orientation of $M$. It follows that, $\forall i=0,1,2$,
\begin{equation}
\begin{split}
 & \quad\ \alpha_i\wedge\dx\theta=\alpha_0\wedge\alpha_1=\alpha_2\wedge\alpha_1=0,\qquad\alpha_1\wedge\alpha_1=-2\alpha_0\wedge\alpha_2=(\dx\theta)^2 , \\
 &  *\alpha_0=\theta\wedge\alpha_2,\qquad *\alpha_1=-\theta\wedge\alpha_1,\qquad *\alpha_2=\theta\wedge\alpha_0,  \qquad *\dx\theta=-\theta\wedge\dx\theta .
\end{split}
\end{equation}
Such are the basic structural equations of the differential system associated to any given oriented Riemannian 3-manifold.

We assume now that $M=M_c$ has constant sectional curvature $c\in\R$. The derivatives
\begin{equation}
 \dx\alpha_0=\theta\wedge\alpha_1,\qquad\dx\alpha_1=2\theta\wedge\alpha_2-2c\,\theta\wedge\alpha_0,\qquad\dx\alpha_2=-c\,\theta\wedge\alpha_1
\end{equation}
have become well-known.

\begin{center}
\textsc{2.2 -- Invariant calibrations}
\end{center}

By straightforward computation or by \cite[Lemma 2.1]{Alb2020a}, a 2-form arising as a constant coefficient linear combination of the invariant differential system \eqref{thealphaiwithadaptedframe} on $M=M_c$ is closed if and only if there exist $Q,Q_1\in\R$ such that
\begin{equation} \label{omega_c}
 \omega_c:=cQ\alpha_0+Q\alpha_2+Q_1\dx\theta .
\end{equation}

We are now interested in a calibration 3-form $\varphi$ on $T^1M$ which is a multiple of the geodesic flow (we only have $\theta$ and the invariant 2-forms to work with), this is, the wedge of $\theta$ with a 2-form $\omega$ of comass 1 on $e_0^\perp$. The definition of comass 1, for any form, is that $\varphi\leq\vol$ and that equality is attained as a supremum on simple 3-vectors $\fraku\wedge\frakv\wedge\mathfrak{w}$, formed of unit tangent directions at the points of $T^1M$, cf. \cite[p.57]{HarLaw}.

Let $\omega=b_0\alpha_0+b_1\alpha_1+b_2\alpha_2+b_3\dx\theta$, with $b_0,\ldots,b_3\in\R$ constant.

By straightforward computation or by checking \cite[Lemma 2.2]{Alb2020a}, we find that $\varphi=\theta\wedge\omega$ is closed if and only if it is of the kind
\begin{equation}
\varphi= \theta\wedge(b_0\alpha_0+b_1\alpha_1+b_2\alpha_2) .
\end{equation}
We remark that there are closed \textit{invariant} 3-forms in the case of the base space with any sectional curvature; simply, they must also satisfy $b_2=0$.

The requirement that $\omega$ is a symplectic form is not mandatory in order to have a calibration, but seems to lead to the less restrictive conditions on the $\varphi$-calibrated submanifolds.

Let us first suppose the orientations on $e_0^\perp$ induced by $(\dx\theta)^2=\dx\theta\wedge\dx\theta=-2e^{1234}$ and $\omega\wedge\omega$ give the same volume. We find that $\omega$ must satisfy
\begin{equation}
 b_0b_2-{b_1}^2-{b_3}^2=-1.
\end{equation}
\begin{prop} \label{circleofcalibrations}
(i)\  \,$\omega\wedge\omega=-2e^{1234}$ if and only if $b_0b_2-{b_1}^2
 -{b_3}^2=-1$\\
(ii) an invariant 3-form $\varphi=\theta\wedge\omega$ as above defines a calibration on $T^1M$ if and only if
 \begin{equation}   \label{invariantthreeformcalibration}
     {b_0}^2+{b_1}^2=1,\qquad b_2=-b_0,\qquad b_3=0 .
 \end{equation}
\end{prop}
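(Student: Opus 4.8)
The plan is to separate the two ingredients hidden in the word \emph{calibration}: that $\varphi$ be closed, and that it have comass $1$. Closedness is already settled, since we saw that $\varphi=\theta\wedge\omega$ is closed exactly when the $\dx\theta$-component drops out, i.e. $b_3=0$; combined with the standing normalization $b_0b_2-{b_1}^2-{b_3}^2=-1$, this leaves $b_0b_2-{b_1}^2=-1$. All the real work is therefore in computing the comass of the $3$-form and imposing that it equal $1$.

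First I would reduce the comass of the $3$-form $\varphi=\theta\wedge\omega$ to that of the $2$-form $\omega$ on the $4$-plane $e_0^\perp=\ker\theta$. The mechanism is that $\theta=e^0$ is a unit covector while $\omega$ lives on $e_0^\perp$. Given any oriented $3$-plane $P$ with unit simple $3$-vector $\xi$, choose an orthonormal basis $p_1,p_2,p_3$ of $P$ with $p_1$ along the orthogonal projection of $e_0$ onto $P$; then $p_2,p_3\in e_0^\perp$ and
\[
 \varphi(\xi)=(e^0\wedge\omega)(p_1,p_2,p_3)=\langle e_0,p_1\rangle\,\omega(p_2,p_3)=c\,\omega(p_2\wedge p_3),
\]
with $c=\|\operatorname{proj}_P e_0\|\in[0,1]$ and $p_2\wedge p_3$ a unit simple $2$-vector in $e_0^\perp$. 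Maximising, the value $c=1$ is always admissible (take $e_0\in P$), so the comass of $\varphi$ equals the comass of $\omega$ on $e_0^\perp$. This reduction is the conceptual heart of the argument.

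Next I would compute the comass of $\omega=b_0\alpha_0+b_1\alpha_1+b_2\alpha_2$ on the oriented Euclidean $4$-space $(e_0^\perp,e^{1234})$. Writing $\omega$ in its canonical form $\lambda_1 f^{12}+\lambda_2 f^{34}$ with $\lambda_1\geq|\lambda_2|$, the comass equals $\lambda_1=\max(|\lambda_1|,|\lambda_2|)=\tfrac12\bigl(|\lambda_1+\lambda_2|+|\lambda_1-\lambda_2|\bigr)$. The two invariants I need are the squared norm and the Pfaffian, which from the structural relations of \eqref{thealphaiwithadaptedframe} read $|\omega|^2=\lambda_1^2+\lambda_2^2={b_0}^2+2{b_1}^2+{b_2}^2$ and $\pfaff(\omega)=\lambda_1\lambda_2=b_0b_2-{b_1}^2$, the latter read off from $\omega\wedge\omega=2(b_0b_2-{b_1}^2)\,e^{1234}$ using $\alpha_1\wedge\alpha_1=-2\alpha_0\wedge\alpha_2$. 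Hence $(\lambda_1+\lambda_2)^2=(b_0+b_2)^2$ and $(\lambda_1-\lambda_2)^2=(b_0-b_2)^2+4{b_1}^2$, giving
\[
 \operatorname{comass}(\varphi)=\tfrac12\Bigl(|b_0+b_2|+\sqrt{(b_0-b_2)^2+4{b_1}^2}\Bigr).
\]

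Finally I would impose the two conditions simultaneously. Substituting the normalization $b_0b_2-{b_1}^2=-1$ into the radicand collapses it to $(b_0+b_2)^2+4$, so that $\operatorname{comass}(\varphi)=\tfrac12\bigl(|b_0+b_2|+\sqrt{(b_0+b_2)^2+4}\bigr)$. Since $\sqrt{s^2+4}\geq2$ for every real $s$, this expression is always $\geq1$, with equality precisely when $s=b_0+b_2=0$; this forces $b_0+b_2=0$, and feeding it back into the normalization yields ${b_0}^2+{b_1}^2=1$. The converse is immediate: the three displayed conditions give $b_3=0$ (closed), reduce the radicand to $4$ so the comass is exactly $1$, and are automatically compatible with the normalization, since $b_0b_2-{b_1}^2=-({b_0}^2+{b_1}^2)=-1$. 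I expect the only genuinely delicate points to be the reduction of the $3$-form comass to the $2$-form comass and the sign bookkeeping in the canonical form of a $2$-form on oriented $\R^4$ (where the Pfaffian is orientation-sensitive); the rest rests on the elementary inequality $\sqrt{s^2+4}\geq2$, which incidentally exhibits the solution locus as the circle ${b_0}^2+{b_1}^2=1$ that names the proposition.
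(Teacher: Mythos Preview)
Your argument is correct. It differs from the paper's in execution, though both rest on the linear algebra of $2$-forms on $\R^4$. The paper does not compute the comass of $\omega$ explicitly; instead it invokes Wirtinger's inequality to conclude that, under the standing normalization $b_0b_2-{b_1}^2=-1$ (which forces $\lambda_1\lambda_2=-1$ and hence comass $\geq1$), comass $=1$ is equivalent to $\|\omega\|^2\leq2$. It then computes $\|\omega\|^2={b_0}^2+2{b_1}^2+{b_2}^2$ and combines this with the normalization to obtain $(b_0+b_2)^2\leq0$. Your approach replaces that black-box appeal to Wirtinger by the explicit singular-value formula $\operatorname{comass}(\omega)=\tfrac12\bigl(|b_0+b_2|+\sqrt{(b_0-b_2)^2+4{b_1}^2}\bigr)$, after which the normalization collapses the radicand to $(b_0+b_2)^2+4$ and the elementary inequality $\sqrt{s^2+4}\geq2$ finishes the job. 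What you gain is self-containment and a transparent reason why the comass can never drop below $1$ once the Pfaffian is fixed; what the paper's route gains is brevity. Your careful reduction of the comass of $\theta\wedge\omega$ to that of $\omega$ on $e_0^\perp$ is also more explicit than in the paper, where this step is essentially taken for granted.
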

\begin{proof}
 As mentioned, $\dx\varphi=0$ if and only if $b_3=0$. Now we must have ${b_1}^2-b_0b_2=1$. The calibration $\varphi$ has norm at most $\sqrt{2}$, the maximum being attained where $\omega$ is symplectic (due to Wirtinger's inequality). This is equivalent to $\omega\wedge*\omega\leq2\vol_{T^1M}$. From
 \begin{equation*}
 \begin{split}
 \langle\omega,\omega\rangle\vol_{T^1M} &= (b_0\alpha_0+b_1\alpha_1+b_2\alpha_2)\wedge\theta\wedge (b_2\alpha_0-b_1\alpha_1+b_0\alpha_2) \\
 & = ({b_0}^2+{b_2}^2+2{b_1}^2)\vol_{T^1M}
   \end{split}
 \end{equation*}
 we find ${b_0}^2+{b_2}^2+2{b_1}^2\leq2$. Hence $(b_0+b_2)^2\leq0$ and therefore ${b_0}^2+{b_1}^2=1$. 
\end{proof}

Notice that $\varphi=\theta\wedge(b_0\alpha_0+b_1\alpha_1-b_0\alpha_2)=-*\omega$. The following result is more surprising. Recall $\omega_c$ from \eqref{omega_c}.
\begin{prop} \label{omegaclosedbadorientation}
 The 2-form $\omega$ is closed if and only if $c=-1$, $b_0=-b_2=\pm1$ and $b_1=0$.
\end{prop}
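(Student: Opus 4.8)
The plan is to obtain $\dx\omega$ by differentiating term by term and then to read off the vanishing conditions. Working with the calibration form secured in Proposition \ref{circleofcalibrations}, that is $\omega=b_0\alpha_0+b_1\alpha_1+b_2\alpha_2+b_3\dx\theta$ subject to $b_2=-b_0$, $b_3=0$ and ${b_0}^2+{b_1}^2=1$, I would first substitute the constant-curvature structural equations for $\dx\alpha_0,\dx\alpha_1,\dx\alpha_2$ and use $\dx(\dx\theta)=0$. Every resulting term is $\theta$ wedged with some $\alpha_i$, so after collecting I expect
\[
 \dx\omega=\theta\wedge\bigl((-2cb_1)\alpha_0+(b_0-cb_2)\alpha_1+2b_1\alpha_2\bigr).
\]

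The next step is to exploit linear independence. Since $\theta\wedge\alpha_0=e^{012}$, $\theta\wedge\alpha_1=e^{014}-e^{023}$ and $\theta\wedge\alpha_2=e^{034}$ are built from pairwise disjoint basis monomials, the three 3-forms $\theta\wedge\alpha_i$ are pointwise linearly independent. Therefore $\dx\omega=0$ is equivalent to the simultaneous vanishing of the three scalar coefficients. The first or third coefficient immediately forces $b_1=0$, and the middle one then reduces to $b_0=cb_2$.

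Finally I would feed these two relations into the calibration identities already available from Proposition \ref{circleofcalibrations}. With $b_1=0$ the normalization ${b_0}^2+{b_1}^2=1$ gives $b_0=\pm1$, hence $b_0\neq0$; substituting $b_2=-b_0$ into $b_0=cb_2$ yields $b_0=-cb_0$, that is $b_0(1+c)=0$, which is possible only when $c=-1$. This produces precisely the stated conclusion $c=-1$, $b_0=-b_2=\pm1$, $b_1=0$. (As a consistency check, at $c=-1$ the closed form $\omega=\alpha_0-\alpha_2$ is recovered among the invariant closed 2-forms $\omega_c$ of \eqref{omega_c} with $Q_1=0$.)

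There is no genuine analytic obstacle here; once the exterior derivative is expanded, the content is short linear bookkeeping. The one point demanding care is the logical interplay between closedness and the previously imposed calibration conditions: closedness on its own delivers only $b_1=0$ and $b_0=cb_2$, and it is exactly the calibration relation $b_0+b_2=0$ together with $b_0\neq0$ that rigidifies the curvature to $c=-1$. Keeping straight which identities are hypotheses and which are consequences is what makes the collapse to a single admissible curvature value the advertised surprise.
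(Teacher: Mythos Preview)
Your proposal is correct and follows essentially the same route as the paper. The paper's proof simply invokes the pre-established characterization \eqref{omega_c} of closed invariant 2-forms (so $\omega=\omega_c$ forces $b_1=0$, $b_0=cQ$, $b_2=Q$) and then reads off $c=-1$ from the calibration constraint $b_0=-b_2$; you instead expand $\dx\omega$ directly via the structural equations, which is exactly the ``straightforward computation'' the paper alludes to when stating \eqref{omega_c}.
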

\begin{proof}
 $\omega=\omega_c\neq0$ if and only if $b_1=0$ and $cQ=b_0=-Q$; hence $b_0=\pm1$ and $c=-1$.
\end{proof}

Now keeping the orientation on $T^1M$ and choosing that on $e_0^\perp$ to be the opposite of $(\dx\theta)^2$, we have the following results analogous to the above.
\begin{prop} \label{omega_goodorientation}
(i)\  \,$\omega\wedge\omega=2e^{1234}$ if and only if $b_0b_2-{b_1}^2
 -{b_3}^2=1$\\
(ii)\ with such $\omega$, the 3-form $\varphi=\theta\wedge\omega$ is a calibration if and only if
$b_0=b_2=\pm1,\ b_1=b_3=0$\\
 (iii)\ moreover $\omega$ is closed if and only if $c=1$.
\end{prop}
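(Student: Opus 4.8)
The plan is to follow the pattern of Proposition~\ref{circleofcalibrations}, keeping careful track of the single sign change caused by reversing the orientation of $e_0^\perp$. For part (i) I would expand $\omega\wedge\omega$ for $\omega=b_0\alpha_0+b_1\alpha_1+b_2\alpha_2+b_3\dx\theta$ using the products recorded after \eqref{thealphaiwithadaptedframe}: only $\alpha_0\wedge\alpha_2=e^{1234}$, $\alpha_1\wedge\alpha_1=(\dx\theta)^2=-2e^{1234}$ and $(\dx\theta)\wedge(\dx\theta)=-2e^{1234}$ survive, every remaining product vanishing. This gives $\omega\wedge\omega=2(b_0b_2-b_1^2-b_3^2)\,e^{1234}$, so under the orientation with $\vol_{e_0^\perp}=+e^{1234}$ the identity $\omega\wedge\omega=2e^{1234}$ reads exactly $b_0b_2-b_1^2-b_3^2=1$.

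For part (ii) I would first impose closedness. From $\dx\varphi=\dx\theta\wedge\omega-\theta\wedge\dx\omega$ the second term vanishes, since every summand of $\dx\omega$ already carries a factor $\theta$, while $\dx\theta\wedge\omega=b_3(\dx\theta)^2=-2b_3\,e^{1234}$ because $\dx\theta\wedge\alpha_i=0$; hence $\dx\varphi=0\iff b_3=0$, and the constraint from (i) becomes $b_0b_2-b_1^2=1$. Next comes the comass reduction: for a unit simple $3$-vector spanning a plane $P$, writing $\psi$ for the angle of $e_0$ to $P$, one checks $\varphi(P)=\cos\psi\,\omega(p,q)$ for a unit $2$-vector $p\wedge q$ lying in $e_0^\perp$, so the maximum is attained on planes through $e_0$ and equals the comass of $\omega$ restricted to $e_0^\perp$.

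Writing $\lambda_1,\lambda_2$ for the canonical values of that $4$-dimensional $2$-form, I have $|\lambda_1\lambda_2|=|b_0b_2-b_1^2|=1$ and, exactly as in Proposition~\ref{circleofcalibrations}, $\lambda_1^2+\lambda_2^2=\langle\omega,\omega\rangle=b_0^2+b_2^2+2b_1^2$. Because $|\lambda_1\lambda_2|=1$ forces $\max(|\lambda_1|,|\lambda_2|)\geq1$, the calibration requirement (comass $\leq1$) collapses to comass $=1$, i.e.\ $|\lambda_1|=|\lambda_2|=1$, i.e.\ $b_0^2+b_2^2+2b_1^2=2$. Combined with $b_0b_2-b_1^2=1$ this yields $(b_0+b_2)^2=4$ together with $b_0b_2=1+b_1^2\geq1$, whence the arithmetic--geometric mean inequality forces $b_0=b_2=\pm1$ and $b_1=0$, proving (ii). For (iii) I would simply substitute the resulting $\omega=\pm(\alpha_0+\alpha_2)$ into the structural equations to obtain $\dx\omega=\pm(\dx\alpha_0+\dx\alpha_2)=\pm(1-c)\,\theta\wedge\alpha_1$, which vanishes precisely when $c=1$.

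The only genuinely delicate point is the comass reduction together with the orientation bookkeeping; everything else is the same linear algebra as in Proposition~\ref{circleofcalibrations}. Indeed the whole distinction between the circle of calibrations obtained there and the two isolated solutions obtained here stems from the sign of the Pfaffian term: it turns $b_1^2=b_0b_2+1$ into $b_1^2=b_0b_2-1$, and hence $(b_0+b_2)^2=0$ into $(b_0+b_2)^2=4$. I would therefore isolate once, as a single lemma, the equivalence ``comass$(\theta\wedge\omega)=1\iff\langle\omega,\omega\rangle=2$ whenever $\omega\wedge\omega=\pm2\,e^{1234}$'', and invoke it in both orientations.
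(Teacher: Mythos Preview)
Your proof is correct, but for part (ii) the paper takes a shorter route than the Wirtinger/canonical-values argument you adapted from Proposition~\ref{circleofcalibrations}. The paper simply observes that comass $\leq1$ forces $|b_i|\leq1$ for every $i$ (each $b_i$ is a value of $\omega$ on a coordinate unit $2$-vector: $b_0=\omega(e_1,e_2)$, $b_1=\omega(e_1,e_4)$, $b_2=\omega(e_3,e_4)$, $b_3=\omega(e_3,e_1)$). Combined with $b_0b_2=1+b_1^2\geq1$ this immediately gives $b_0b_2=1$, hence $b_1=0$ and $b_0=b_2=\pm1$. Your approach has the virtue of making the parallel with Proposition~\ref{circleofcalibrations} explicit, and your proposed unifying lemma (``comass $1\iff\langle\omega,\omega\rangle=2$ whenever $\omega\wedge\omega=\pm2e^{1234}$'') is a clean way to package both cases; the paper's shortcut, on the other hand, avoids the detour through singular values entirely and makes part (ii) a two-line remark. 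Parts (i) and (iii) are handled the same way in both.
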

\begin{proof}
 Notice that comass 1 implies all $|b_i|\leq1$. Now, in case (ii), we have $b_0b_2={b_1}^2+1$. The result follows.
\end{proof}

\begin{Rema}[Remarks.]
  1)\ As referred above, we do have two symplectic forms $\omega$ with either orientation on $e_0^\perp$. In case of \eqref{invariantthreeformcalibration} it is written in an orthonormal frame as $ \omega=e^1\wedge(b_0e^2+b_1e^4)+e^3\wedge(b_1e^2-b_0e^4)$. And in the case of the opposite orientation, $\pm\omega=e^{12}+e^{34}$. \,2)\
  In the last case, again, we have $\varphi=\theta\wedge\omega=-*\omega$ on the 5-dimensional space. \,3)\  Notice the uniqueness of invariant $\varphi$ is assured only when we ask that $\omega$ too is closed.  \,4)\ We stress that there are no calibrations of the kind given by Propositions \ref{omegaclosedbadorientation} or \ref{omega_goodorientation}(iii) on a flat manifold.
\end{Rema}

Notice that $\theta\wedge\dx\theta$ cannot contribute to a closed invariant 3-form. Therefore, checking the ``Table of invariant forms and their exterior derivatives'' from \cite[p.182]{GluckZiller}, which is specific to the 3-sphere case $c=1$, we conclude our differential system must sit perpendicularly to theirs, since we do not obtain the \textit{two} closed 2-forms on $T^1M$ which add to give a calibration. The subbundle $e_0^\perp$ is the same, the 2-forms there are skew.

\begin{center}
\textsc{2.3 -- Exact and cohomologous calibrations}
\end{center}

Here we find out when do two invariant calibrations on $T^1M$ define the same invariant cohomology class ($\sim$). By invariance of the structure one may restrict to a cohomology with constant coefficients. So we consider the closed 3-forms
\begin{equation}
\varphi_t=\theta\wedge(b_{0t}\alpha_0+b_{1t}\alpha_1+b_{2t}\alpha_2)\quad\mbox{and}\quad
\varphi=\theta\wedge(b_{0}\alpha_0+b_{1}\alpha_1+b_{2}\alpha_2).
\end{equation}

\begin{prop} \label{cohomologous3forms}
 \,\ $\varphi_t\sim\varphi\ \Longleftrightarrow\ b_{0t}-b_{0}=-c(b_{2t}-b_2)$.
\end{prop}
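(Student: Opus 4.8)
The plan is to compute directly within the complex of invariant forms, as sanctioned by the remark that an invariant cohomology class admits an invariant representative with constant coefficients. Thus $\varphi_t\sim\varphi$ means precisely that $\varphi_t-\varphi=\dx\beta$ for some invariant $2$-form $\beta$ on $T^1M$. The first task is therefore to identify all such $\beta$. Since the structure group has been reduced to $\SO(2)$, decomposing $\wedge^2(e_0^\perp)$ under this action shows that the only invariant $2$-forms are spanned by $\alpha_0,\alpha_1,\alpha_2$ and $\dx\theta$; moreover no invariant $2$-form can carry a $\theta$-factor, because the unique invariant $1$-form is $\theta$ itself and $\theta\wedge\theta=0$. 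Hence I would write $\beta=p_0\alpha_0+p_1\alpha_1+p_2\alpha_2+p_3\,\dx\theta$ with $p_i\in\R$.

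Next I would differentiate using the structural equations $\dx\alpha_0=\theta\wedge\alpha_1$, $\dx\alpha_1=2\theta\wedge\alpha_2-2c\,\theta\wedge\alpha_0$, $\dx\alpha_2=-c\,\theta\wedge\alpha_1$ together with $\dx(\dx\theta)=0$, obtaining
\[
 \dx\beta=\theta\wedge\bigl(-2cp_1\,\alpha_0+(p_0-cp_2)\,\alpha_1+2p_1\,\alpha_2\bigr),
\]
while on the other hand $\varphi_t-\varphi=\theta\wedge\bigl((b_{0t}-b_0)\alpha_0+(b_{1t}-b_1)\alpha_1+(b_{2t}-b_2)\alpha_2\bigr)$.

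Then, using that $\theta\wedge\alpha_0,\ \theta\wedge\alpha_1,\ \theta\wedge\alpha_2$ are linearly independent $3$-forms, I would equate coefficients to get the three scalar equations $b_{0t}-b_0=-2cp_1$, $b_{1t}-b_1=p_0-cp_2$, and $b_{2t}-b_2=2p_1$. Eliminating $p_1$ between the first and third yields exactly $b_{0t}-b_0=-c(b_{2t}-b_2)$, the asserted necessary condition. For the converse, given that identity one sets $p_1=\tfrac12(b_{2t}-b_2)$ (compatible with both equations) and solves the middle equation by, say, $p_0=b_{1t}-b_1$ with $p_2=p_3=0$, producing an explicit $\beta$ with $\dx\beta=\varphi_t-\varphi$.

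I expect no serious obstacle: once the invariant $2$-forms are correctly pinned down, the argument is a coefficient match. The only points deserving care are the structural facts that invariant cohomology is computed by invariant forms and that the four $2$-forms above exhaust the invariants --- both already built into the $\SO(2)$-reduction. It is worth noting why only $b_0$ and $b_2$ enter the condition: the free parameter $p_0$, acting through $\dx\alpha_0=\theta\wedge\alpha_1$, leaves the $\alpha_1$-component entirely unconstrained, whereas the single parameter $p_1$ couples the $\alpha_0$- and $\alpha_2$-components precisely with the curvature factor $-c$ arising in $\dx\alpha_1$.
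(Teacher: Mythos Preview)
Your proof is correct and follows essentially the same approach as the paper's: write an invariant primitive $\beta$, differentiate via the structural equations, and match coefficients. You are in fact slightly more careful than the paper --- you include the harmless $p_3\,\dx\theta$ term, and your middle equation $b_{1t}-b_1=p_0-cp_2$ is the correct one (the paper's ``$p_0-2p_2c$'' is a typo), though this has no bearing on the conclusion.
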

\begin{proof}
We have $\varphi_t=\varphi+\dx(\sum_ip_i\alpha_i)$ for some constant $p_0,p_1,p_2$ if and only if
\begin{equation*}
 \varphi_t-\varphi
   = \theta\wedge(p_0\alpha_1+2p_1\alpha_2-2cp_1\alpha_0-p_2c\alpha_1) .
\end{equation*}
This is equivalent to the system
$b_{0t}-b_{0}=-2cp_1,\ \, b_{1t}-b_{1}=p_0-2p_2c,\ \, b_{2t}-b_{2}=2p_1$, whose solutions exist under the referred condition.
\end{proof}

We restrict to the two types of calibration found before:
\begin{equation}
 \varphi_t=\theta\wedge(\cos{t}\,\alpha_0+\sin t\,\alpha_1-\cos t\,\alpha_2) \qquad\mbox{and}\qquad \varphi_+=\theta\wedge(\alpha_0+\alpha_2) .
\end{equation}
Notice that $\varphi_+=\theta\wedge\omega_{1}$ and $\varphi_-=\varphi_0=\theta\wedge\omega_{-1}$, recalling $\omega_c$ from \eqref{omega_c} with $Q=\pm1,Q_1=0$.

The theory of exact calibrations is relevant on its own, hence the following results. By Proposition \ref{cohomologous3forms}, we have $\varphi_t\sim \varphi_0=\theta\wedge(\alpha_0-\alpha_2)$ iff $\cos t-1=c(\cos t-1)$. And $\varphi_t\sim\varphi_+$ iff $\cos t-1=c(\cos t+1)$.

\begin{prop} \label{cohomologous3formsc=1}
If $c=1$, then all $\varphi_t\sim\varphi_0\sim 0$ and  $\varphi_+\nsim 0$.
\end{prop}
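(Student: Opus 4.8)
The plan is to apply Proposition \ref{cohomologous3forms} directly with $c=1$, reading off the cohomology relations among the one-parameter family $\varphi_t$, the base case $\varphi_0=\theta\wedge(\alpha_0-\alpha_2)$, and $\varphi_+=\theta\wedge(\alpha_0+\alpha_2)$, and then separately verifying the exactness and non-exactness claims. First I would settle $\varphi_t\sim\varphi_0$: by the criterion already recorded just before the statement, $\varphi_t\sim\varphi_0$ holds iff $\cos t-1=c(\cos t-1)$, which for $c=1$ is the identity $\cos t-1=\cos t-1$, valid for every $t$. Hence all members of the family lie in a single invariant cohomology class, namely that of $\varphi_0$.

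Next I would show $\varphi_0\sim 0$, i.e.\ that $\varphi_0=\theta\wedge(\alpha_0-\alpha_2)$ is exact. The natural candidate primitive is a constant-coefficient combination $\eta=\sum_i p_i\alpha_i$, using the structural equations $\dx\alpha_0=\theta\wedge\alpha_1$, $\dx\alpha_1=2\theta\wedge\alpha_2-2c\,\theta\wedge\alpha_0$, $\dx\alpha_2=-c\,\theta\wedge\alpha_1$ specialized to $c=1$. Computing $\dx(p_0\alpha_0+p_1\alpha_1+p_2\alpha_2)=\theta\wedge\bigl((p_0-cp_2)\alpha_1+2p_1\alpha_2-2cp_1\alpha_0\bigr)$ and matching against $\theta\wedge(\alpha_0-\alpha_2)$, I need $-2cp_1=1$ and $2p_1=-1$ (the $\alpha_1$-coefficient being freely set to zero). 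With $c=1$ both equations give $p_1=-\tfrac12$ consistently, so $\eta=-\tfrac12\alpha_1$ is a primitive and $\varphi_0=\dx\eta$ is exact. Combined with the previous paragraph, this yields $\varphi_t\sim\varphi_0\sim 0$ for all $t$.

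Finally, for $\varphi_+\nsim 0$ I would again use Proposition \ref{cohomologous3forms}, now comparing $\varphi_+=\theta\wedge(\alpha_0+\alpha_2)$ with the zero form. Exactness of $\varphi_+$ would require the system from the proposition's proof to be solvable with target coefficients $b_0=1,\ b_1=0,\ b_2=1$; in particular one needs simultaneously $1=-2cp_1$ and $1=2p_1$. For $c=1$ these force $p_1=-\tfrac12$ and $p_1=\tfrac12$, an inconsistency, so no constant-coefficient primitive exists and $\varphi_+$ is not invariantly cohomologous to $0$. Equivalently, by the displayed criterion $\varphi_+\sim\varphi_0$ iff $\cos t-1=c(\cos t+1)$ evaluated at the constant form, which at $c=1$ demands $-1=1$; the failure of this identity is exactly the obstruction. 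The one point deserving care is the distinction between genuine exactness and mere \emph{invariant} exactness: since we work throughout in the cohomology of constant-coefficient invariant forms, the non-existence of a constant primitive is precisely what $\varphi_+\nsim 0$ asserts, so no delicate global argument beyond the linear-algebraic solvability is required.
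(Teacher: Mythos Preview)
Your proposal is correct and follows exactly the approach the paper intends: Proposition~\ref{cohomologous3formsc=1} is stated without proof because it is an immediate consequence of Proposition~\ref{cohomologous3forms} and the two displayed criteria preceding it, and your argument simply spells out that deduction (including exhibiting the primitive $-\tfrac12\alpha_1$ for $\varphi_0$ when $c=1$). One minor remark: in your ``equivalently'' sentence the phrasing via $\cos t-1=c(\cos t+1)$ is slightly tangled, but your primary argument---directly applying the criterion $b_{0t}-b_0=-c(b_{2t}-b_2)$ to $\varphi_+$ versus $0$, giving $-1=c$ and hence a contradiction for $c=1$---is clean and suffices on its own.
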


\begin{prop}  \label{cohomologous3formscneq1}
If $c\neq1$, then:\\
(i)\ \,$\varphi_t\sim\varphi_{s}$ iff $\cos t=\cos s$\\
(ii)\,\ $\varphi_t\sim0$ iff  $t={\frac{\pi}{2}}\mod\Z\pi$\\
(iii)\,\ $\varphi_t\sim\varphi_+$ iff $\cos t=\frac{1+c}{1-c}=-1+\frac{2}{1-c}$; in particular, $\varphi_t\nsim\varphi_+$ for $c>0$\\
(iv)\ \,$\varphi_+\sim0$ iff $c=-1$.
\end{prop}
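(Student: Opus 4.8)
The plan is to reduce every claim to the single cohomology criterion of Proposition \ref{cohomologous3forms}, namely $\varphi_t\sim\varphi \iff b_{0t}-b_0 = -c(b_{2t}-b_2)$, and then read off each equivalence by substituting the explicit coefficients of the two families and using $c\neq1$. Recall that for $\varphi_t=\theta\wedge(\cos t\,\alpha_0+\sin t\,\alpha_1-\cos t\,\alpha_2)$ one has $b_{0t}=\cos t$ and $b_{2t}=-\cos t$, while $\varphi_+$ has coefficients $(b_0,b_1,b_2)=(1,0,1)$ and the zero class corresponds to all $b_i=0$.

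For part (i) I would compare $\varphi_t$ with $\varphi_s$ from the same family: the criterion reads $\cos t-\cos s = -c(-\cos t+\cos s)=c(\cos t-\cos s)$, i.e. $(1-c)(\cos t-\cos s)=0$. Since $c\neq1$ the factor $1-c$ is nonzero, forcing $\cos t=\cos s$, which is the asserted equivalence. Part (ii) is then the special case $\varphi=0$: the criterion becomes $\cos t=c\cos t$, i.e. $(1-c)\cos t=0$, whence $\cos t=0$, that is $t\equiv\frac{\pi}{2}\pmod{\Z\pi}$.

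For part (iii), substituting the coefficients of $\varphi_+$ gives $\cos t-1 = -c(-\cos t-1)=c(\cos t+1)$, already recorded before the statement; solving for $\cos t$ (again dividing by $1-c\neq0$) yields $\cos t=\frac{1+c}{1-c}=-1+\frac{2}{1-c}$. The only point needing more than a substitution is the final ``in particular'' clause, where I would observe that a value $t$ exists only if the right-hand side lies in $[-1,1]$: a short sign analysis shows that for $0<c<1$ one has $\frac{1+c}{1-c}>1$, while for $c>1$ one has $\frac{1+c}{1-c}<-1$, so in either case $|\cos t|>1$ is impossible and no $t$ works. Finally, part (iv) is the criterion applied to $\varphi_+$ and the zero class: $1=-c$, that is $c=-1$.

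I expect no serious obstacle, since the proposition is an immediate corollary of Proposition \ref{cohomologous3forms} together with the hypothesis $c\neq1$; the only step requiring genuine (if elementary) care is the case distinction in (iii) certifying that $\frac{1+c}{1-c}$ escapes the interval $[-1,1]$ whenever $c>0$.
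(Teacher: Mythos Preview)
Your proposal is correct and follows exactly the approach the paper intends: the proposition is stated without a separate proof, being an immediate consequence of the criterion in Proposition~\ref{cohomologous3forms} together with the two substitutions already recorded just before it (namely $\varphi_t\sim\varphi_0$ iff $\cos t-1=c(\cos t-1)$ and $\varphi_t\sim\varphi_+$ iff $\cos t-1=c(\cos t+1)$). Your explicit verification of each item, including the sign analysis for the ``in particular'' clause in (iii), simply fills in the routine details the paper leaves to the reader.
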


Once again we observe the particular cases of $c=\pm1$. If $c=0$, then $\varphi_-=\varphi_0\sim\varphi_+\nsim0$.

\begin{center}

\subsection*{3 -- Unit vector field and calibrations}

\end{center}

\begin{center}
\textsc{3.1 -- Solutions with minimal volume}
\end{center}

Inspired by \cite{GluckZiller}, we apply the theory of calibrations to the search for the minimal volume vector fields on oriented constant sectional curvature 3-manifolds.

Let $X\in\XIS_M$ be a unit norm vector field on a given Riemannian $(n+1)$-manifold. For the moment it is easy to work in general dimension.

Within the homology class of $X(M)\subset T^1M$, the absolute minimal volume unit vector fields are those for which $\varphi=\vol$ when restricted to the submanifold. In other words, $X$ has minimal volume when $X(M)$ is a calibrated or $\varphi$-submanifold of the pair $(T^1M,\varphi)$. Recalling $\vol_X$ from \eqref{Definition_volume}, the equation is equivalent to
\begin{equation}
X^*\varphi=\vol_X.
\end{equation}

Certainly, every vector field induces a monomorphism $H_{n+1}(M)\hookrightarrow H_{n+1}(T^1M)$. And indeed we have a well-known fundamental relation, cf. \cite{HarLaw}: for any unit $X'\in\XIS_M$ in the same homology class as $X$,
\begin{equation}
   \int_M\vol_X =\int_{X(M)}\varphi=\int_{X'(M)}\varphi \leq
   \int_M\vol_{X'}.
\end{equation}

The theory of calibrations holds for submanifolds with boundary. One may choose a fixed open subset, a domain $\Omega\subset M$ perhaps with non-empty boundary, and seek for an immersion $X:\Omega\lrr T^1M$ giving a $\varphi$-submanifold. Prescribing values for $X$ on a given compact boundary $\partial\Omega$ implies that certain \textit{moment} conditions are satisfied, cf. \cite[(6.9)]{HarLaw}.

This observation also tells that we may take just the invariant $\varphi$ when considering a homogeneous Riemannian manifold. Since the minimal solutions will be invariant too.

Now, following  e.g. notation from \cite{Alb2018a,Alb2019b}, we use $\pi^*X,\pi^\estrela X$ (different stars) for the canonical lifts of a vector field $X$ on $M$. We write the well-known `horizontal plus vertical' decomposition in $TTM$
\begin{equation}
 \dx X(Y)=\pi^*Y+\pi^\estrela(\na_YX).
\end{equation}

Since $T^1M=\{u\in TM:\ \|u\|^2=1\}$ is also characterized as the locus of $\|\xi\|^2=1$, where $\xi$ is the tautological vertical tangent vector field over $TM$, this is, $\xi_u=u$, and since $(\pi^\estrela\na)_Y\xi=Y^v$, it follows that $TT^1M=\xi^\perp$ for the Sasaki metric. Moreover, if $X$ is a unit norm vector field on $M$, then $\dx X(Y)\in TT^1M$.

Now let us recall the adapted oriented orthonormal frames $\{e_0,e_1,\ldots,e_n,e_{n+1},\ldots,e_{2n}\}$ on $T^1M$ varying with $u\in T^1M$. The geodesic spray ${e_0}$ equals $\pi^*X$ over the submanifold $X(\Omega)$, for any $X$. In other words, $X^*\theta=X^\flat$. The $e_i,\ i\leq n$, are the horizontal vectors and thus project at each $u\in T^1M$ to give a frame $e_i\in TM$. Hence we may write
\begin{equation}
\dx X(e_i)=e_i+\sum_{j=1}^n A_{ij}e_{j+n} ,
\end{equation}
for all $i=0,1\ldots,n$, where $A_{ij}=\langle\na_{e_i}X,e_j\rangle$.

We now resume with the study in dimension three.

It is not at all clear how the choice of a 3-form $\varphi=\theta\wedge\omega$ as in Section 2 may influence the existence of a solution vector field as calibrated submanifold, if it ever does.

\begin{lemma} \label{Lemma_varphicalibratedvectorfield}
 Let $\varphi=\theta\wedge(b_0\alpha_0+b_1\alpha_1+b_2\alpha_2)$. Then $X^*\varphi=\vol_X$ if and only if
 \begin{equation}  \label{varphicalibratedvectorfield}
 b_0+b_1(A_{11}+A_{22})+b_2(A_{11}A_{22}-A_{12}A_{21})=\sqrt{1+\sum_{i,j=0}^2{A_{ij}}^2+\sum_{j=0}^2{A_{(j0)}}^2}
 \end{equation}
 where the $A_{(j0)}$ are defined below.
\end{lemma}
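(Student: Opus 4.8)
The plan is to pull back both $\varphi$ and the volume density to $\Omega$ via $X$ and compare the single coefficient of each relative to the Riemannian volume form of $M$. First I would record the tangent frame of the submanifold $X(\Omega)\subset T^1M$. Over $X(\Omega)$ the geodesic spray coincides with $\pi^*X$, so the projected horizontal frame is the orthonormal frame $\{e_0=X,e_1,e_2\}$ of $M$; using the horizontal-plus-vertical decomposition $\dx X(e_i)=e_i+\sum_j A_{ij}e_{j+2}$ together with $A_{i0}=\langle\na_{e_i}X,X\rangle=\tfrac12 e_i(\|X\|^2)=0$ (unit norm), the tangent vectors become
\[
 v_i:=\dx X(e_i)=e_i+A_{i1}e_3+A_{i2}e_4,\qquad i=0,1,2.
\]
Both $X^*\varphi$ and $\vol_X$ are multiples of $\vol_M$ at each point, taken with respect to the oriented orthonormal frame $\{X,e_1,e_2\}$, so the calibration identity $X^*\varphi=\vol_X$ is equivalent to equality of the two scalar coefficients, that is, of $\varphi(v_0,v_1,v_2)$ and $\sqrt{\deter{G}}$, where $G_{ij}=\langle v_i,v_j\rangle$.

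For the left-hand side I would exploit the factor $\theta=e^0$. Since $e^0(v_0)=1$ and $e^0(v_1)=e^0(v_2)=0$, expanding the wedge gives $\varphi(v_0,v_1,v_2)=\omega(v_1,v_2)$ with $\omega=b_0\alpha_0+b_1\alpha_1+b_2\alpha_2$. I then evaluate the three invariant $2$-forms on $(v_1,v_2)$ straight from the dual-basis values above: $\alpha_0(v_1,v_2)=e^{12}(v_1,v_2)=1$; $\alpha_1(v_1,v_2)=e^{14}(v_1,v_2)-e^{23}(v_1,v_2)=A_{22}-(-A_{11})=A_{11}+A_{22}$; and $\alpha_2(v_1,v_2)=e^{34}(v_1,v_2)=A_{11}A_{22}-A_{12}A_{21}$. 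Summing with the weights $b_0,b_1,b_2$ reproduces exactly the left member of \eqref{varphicalibratedvectorfield}.

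For the right-hand side I would compute the induced Gram matrix. From $\langle v_i,v_j\rangle=\delta_{ij}+A_{i1}A_{j1}+A_{i2}A_{j2}$ we get $G=I_3+\tilde A\tilde A^t$, where $\tilde A=(A_{ij})_{i=0,1,2;\,j=1,2}$ collects the vertical parts of the $v_i$. By Sylvester's identity $\deter{(I_3+\tilde A\tilde A^t)}=\deter{(I_2+\tilde A^t\tilde A)}=1+\tr{(\tilde A^t\tilde A)}+\deter{(\tilde A^t\tilde A)}$, and Cauchy--Binet rewrites the last term as the sum of squared $2\times2$ minors of $\tilde A$. This is precisely formula \eqref{Definition_volume} for $m=3$: the trace term is $\sum_{i,j=0}^2 A_{ij}^2$ (the $j=0$ entries being zero), and the determinant term is $\sum_{i<k}\|\na_{e_i}X\wedge\na_{e_k}X\|^2=\sum_{j=0}^2 A_{(j0)}^2$, the $A_{(j0)}$ being exactly the three $2\times2$ minors defined below. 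Hence $\vol_X$ equals the stated square root, and equating the two coefficients yields the claimed equivalence.

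The computation carries no genuine obstacle; the points that merely need care are the sign in $\alpha_1$ (the $-e^{23}$ term contributing $+A_{11}$, not $-A_{11}$) and the determinant bookkeeping. The only conceptual input is the vanishing $A_{i0}=0$, which simultaneously removes the first column of $\na X$ from the tangent vectors and makes the $j=0$ terms in $\sum_{i,j=0}^2 A_{ij}^2$ inert, so that the expression under the root coincides with \eqref{Definition_volume} once the three area minors are named $A_{(j0)}$.
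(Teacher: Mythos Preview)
Your argument is correct and follows essentially the same route as the paper: the evaluation of $X^*\varphi$ via the factor $\theta$ and the explicit values of $\alpha_0,\alpha_1,\alpha_2$ on $(v_1,v_2)$ is identical to the paper's computation, and your Gram--determinant treatment of the right-hand side (Sylvester plus Cauchy--Binet) is just a repackaging of the paper's direct expansion of the trivector $X_*(e_0\wedge e_1\wedge e_2)$ in the orthonormal basis and reading off its norm. The only cosmetic difference is that the paper names the three $2\times2$ minors $A_{(j0)}$ by exhibiting them as coefficients of $e_{034},e_{134},e_{234}$ in that expansion, whereas you obtain them from Cauchy--Binet; either way the content is the same.
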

\begin{proof}
  First, by definition of Riemannian immersion, we have the formula from \eqref{Definition_volume} as well as the helpful identity developed in \cite{BritoChaconNaveira} for all dimensions: $\vol_X=\|X_*(e_0\wedge e_1\wedge\cdots\wedge e_n)\|\,\vol_M$. Thus we compute
 \begin{eqnarray*}
  \lefteqn{X_*(e_0\wedge e_1\wedge e_2) =}  \\
  & &= (e_0+A_{01}e_3+A_{02}e_4)\wedge(e_1+A_{11}e_3+A_{12}e_4)\wedge(e_2+A_{21}e_3+A_{22}e_4) \\
  & & = e_{012}+A_{21}e_{013}+A_{11}e_{032}+A_{22}e_{014}+A_{12}e_{042} +A_{01}e_{312}+A_{02}e_{412}+\\
  & & \ \ \ \ \underbrace{(A_{11}A_{22}-A_{21}A_{12})}_{A_{(00)}}e_{034}+\underbrace{(A_{02}A_{21}-A_{01}A_{22})}_{-A_{(10)}}e_{134}+\underbrace{(A_{01}A_{12}-A_{02}A_{11})}_{A_{(20)}}e_{234}
 \end{eqnarray*}
 where $A_{(ij)}$ denote the minors of $(A_{ij})_{i,j=0,1,2}$. Then, since all $A_{i0}=0$,
 \begin{equation*}
  \|X_*(e_0\wedge e_1\wedge e_2)\|^2=1+\sum_{i,j=0}^2{A_{ij}}^2+\sum_{j=0}^2{A_{(j0)}}^2.
 \end{equation*}

 Considering $\varphi=\theta\wedge(b_0\alpha_0+b_1\alpha_1+b_2\alpha_2)$ and recalling \eqref{thealphaiwithadaptedframe} it is easy to deduce that $X^*\varphi(e_0,e_1,e_2)=b_0+b_1(A_{11}+A_{22})+b_2(A_{11}A_{22}-A_{12}A_{21})$.

 One may also compute $X^*\varphi$ directly from $X^*\theta=X^\flat=e^0$ and, moreover, $X^*e^i=e^i$, for $0\leq i\leq2$, and $X^*e^3=A_{01}e^0+A_{11}e^1+A_{21}e^2$ and $X^*e^4=A_{02}e^0+A_{12}e^1+A_{22}e^2$.
\end{proof}


We let
\begin{equation}\label{veryclosedcalibration}
 \varphi_\pm=\theta\wedge(\alpha_0\pm\alpha_2).
\end{equation}

\begin{prop} \label{Propfundamental}
 Let the oriented Riemannian manifold $(M,g)$ have constant sectional curvature (any constant). Then a vector field $X$ on the manifold $M$ corresponds to a $\varphi_\pm$-submanifold if and only if (i) $\na_XX=0$ and (ii)  $\langle\na_YX,Y\rangle=\pm\langle\na_ZX,Z\rangle$ and $\langle\na_YX,Z\rangle=\mp\langle\na_ZX,Y\rangle$, $\forall Y,Z$ such that $X,Y,Z$ is any orthonormal frame on $M$.
\end{prop}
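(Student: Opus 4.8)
The plan is to reduce the statement to the pointwise identity already isolated in Lemma \ref{Lemma_varphicalibratedvectorfield} and then to recognise the resulting scalar equation as a vanishing sum of squares. First I would fix an adapted oriented orthonormal frame with $e_0=\pi^*X=X$ over the submanifold $X(\Omega)$, so that $Y:=e_1$ and $Z:=e_2$ complete $X$ to an orthonormal frame of $M$. Then $A_{ij}=\langle\na_{e_i}X,e_j\rangle$, and because $X$ has unit norm one has $\langle\na_W X,X\rangle=\tfrac12 W\langle X,X\rangle=0$, giving $A_{00}=A_{10}=A_{20}=0$ exactly as used in the Lemma. Specialising Lemma \ref{Lemma_varphicalibratedvectorfield} to $\varphi_\pm$, i.e. $b_0=1,\ b_1=0,\ b_2=\pm1$, and writing $D:=A_{11}A_{22}-A_{12}A_{21}=A_{(00)}$, the calibration equation $X^*\varphi=\vol_X$ becomes
\[
1\pm D=\sqrt{1+A_{01}^2+A_{02}^2+A_{11}^2+A_{12}^2+A_{21}^2+A_{22}^2+D^2+A_{(10)}^2+A_{(20)}^2}.
\]

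Since the right-hand side is non-negative, equality forces $1\pm D\ge 0$, so squaring is reversible. Squaring and cancelling the common $1+D^2$ leaves $\pm 2D$ on the left, and transposing it I would complete the part quadratic in $A_{11},A_{12},A_{21},A_{22}$ into squares, using $A_{11}^2+A_{22}^2+A_{12}^2+A_{21}^2\mp 2D=(A_{11}\mp A_{22})^2+(A_{12}\pm A_{21})^2$. The equation then reads
\[
0=A_{01}^2+A_{02}^2+A_{(10)}^2+A_{(20)}^2+(A_{11}\mp A_{22})^2+(A_{12}\pm A_{21})^2,
\]
a vanishing sum of squares, so each term is zero: $A_{01}=A_{02}=0$, which together with $A_{00}=0$ says precisely $\na_XX=0$ (condition (i)), while $A_{11}=\pm A_{22}$ and $A_{12}=\mp A_{21}$ give condition (ii). I would then note that $A_{(10)}=A_{01}A_{22}-A_{02}A_{21}$ and $A_{(20)}=A_{01}A_{12}-A_{02}A_{11}$ vanish automatically once $A_{01}=A_{02}=0$, so no spurious constraint survives; conversely (i)--(ii) make the radicand equal to $(1\pm D)^2$ with $1\pm D\ge 0$, recovering $X^*\varphi=\vol_X$.

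The step requiring the most care is the passage from the single frame $(X,Y,Z)=(e_0,e_1,e_2)$, in which the computation is carried out, to the ``for all orthonormal $Y,Z$'' phrasing of (ii). For this I would verify $\SO(2)$-invariance of the two conditions on $X^\perp$: viewing $W\mapsto\na_WX$ as the matrix $\left(\begin{smallmatrix}A_{11}&A_{12}\\A_{21}&A_{22}\end{smallmatrix}\right)$, the $\varphi_+$ condition says it lies in $\R I\oplus\R J$ with $J=\left(\begin{smallmatrix}0&1\\-1&0\end{smallmatrix}\right)$, and the $\varphi_-$ condition says it is symmetric and trace-free; both subspaces are stable under conjugation by $\SO(2)$, so each condition is independent of the oriented frame chosen. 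Finally I would remark that constant sectional curvature enters only through the structural equations and Propositions \ref{circleofcalibrations} and \ref{omega_goodorientation}, which guarantee that $\varphi_\pm$ is a closed form of comass $1$, hence a genuine calibration for which ``$\varphi$-submanifold'' means exactly the pointwise equality $X^*\varphi=\vol_X$; the algebraic equivalence derived above never invokes the curvature itself.
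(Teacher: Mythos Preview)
Your proof is correct and follows essentially the same route as the paper: apply Lemma~\ref{Lemma_varphicalibratedvectorfield} with $b_0=1,\,b_1=0,\,b_2=\pm1$, square, and rearrange into the vanishing sum of squares $A_{01}^2+A_{02}^2+(A_{11}\mp A_{22})^2+(A_{12}\pm A_{21})^2+A_{(10)}^2+A_{(20)}^2=0$. You add some welcome care the paper leaves implicit---the sign check before squaring, the redundancy of $A_{(10)},A_{(20)}$, and the $\SO(2)$-invariance of (ii) (which the paper records only later as a remark)---but the argument is the same.
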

\begin{proof}
Applying the lemma, if $\|X^*\varphi_\pm\|^2=(X^*\varphi_\pm(e_0,e_1,e_2))^2$ equals $\|X_*(e_0\wedge e_1\wedge e_2)\|^2$, then
 \begin{equation}  \label{normofvarphivectorfield}
      1\pm2A_{(00)}+{A_{(00)}}^2 = 1+\sum_{i,j=0}^2{A_{ij}}^2+\sum_{j=0}^2{A_{(j0)}}^2,
\end{equation}
this is,
\begin{equation*}
 {A_{01}}^2+{A_{02}}^2+{A_{11}}^2+{A_{12}}^2+{A_{21}}^2+{A_{22}}^2+ {A_{(10)}}^2+{A_{(20)}}^2\mp2A_{(00)}=0.
\end{equation*}
 Since $A_{(00)}=A_{11}A_{22}-A_{21}A_{12}$, we find
 \begin{equation*}
  {A_{01}}^2+{A_{02}}^2+(A_{11}\mp A_{22})^2+(A_{12}\pm A_{21})^2+
  {A_{(10)}}^2+{A_{(20)}}^2=0
 \end{equation*}
 giving immediately the three identities in (i-ii). The reciprocal, the three identities implying \eqref{varphicalibratedvectorfield}, is easily checked.
 \end{proof}

 Recall the notions of closed vector field, $\dx X^\flat=0$, of divergence-free or co-closed vector field, $\mathrm{tr}_g{\na X}=0$, and that of harmonic vector field (as a 1-form, not as a map).
 \begin{prop}
 Let $X$ be a solution of equations (i-ii) in Proposition \ref{Propfundamental}. We have that:
\begin{itemize}
 \item in the case $\varphi_+$, then $X$ is a Killing vector field and therefore co-closed. It is not closed and thus not a harmonic vector field.
 \item (cf. Proposition \ref{prop_nonexistenceminimalvolumevectorfield} below) in the case $\varphi_-$, then $X$ is closed and co-closed. Hence harmonic. $X$ is \emph{not} Killing. On the other hand, the distribution $X^\perp\subset TM$ is integrable.
\end{itemize}
 \end{prop}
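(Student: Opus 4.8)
The plan is to reduce both assertions to the linear algebra of the endomorphism $\na X$ restricted to the frame $\{X,e_1,e_2\}$, and only afterwards invoke constant curvature. Fix an oriented orthonormal frame with $e_0=X$ and put $A_{ij}=\langle\na_{e_i}X,e_j\rangle$ for $i,j\in\{0,1,2\}$. Unit norm forces $A_{i0}=A_{0i}=0$, and hypothesis (i) is exactly $A_{01}=A_{02}=0$, so the whole tensor lives in the block $(A_{ab})_{a,b=1,2}$. Three scalars are read off this block: $\mathrm{div}\,X=\tr{\na X}=A_{11}+A_{22}$ (so $X$ is co-closed iff this vanishes); the $2$-form $\dx X^\flat$, whose only possibly non-zero component is $A_{12}-A_{21}$ (so $X$ is closed iff $A_{12}=A_{21}$); and the symmetric part of $\na X$, so that $X$ is Killing iff $A_{11}=A_{22}=0$ and $A_{12}=-A_{21}$. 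Hypothesis (ii) now reads: for $\varphi_+$, $A_{11}=A_{22}=:\lambda$ and $A_{12}=-A_{21}=:\mu$ (a scalar-plus-rotation block); for $\varphi_-$, $A_{11}=-A_{22}=:\lambda$ and $A_{12}=A_{21}=:\mu$ (a symmetric trace-free block).

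The case $\varphi_-$ then falls out immediately. The block is symmetric and trace-free, so $\mathrm{div}\,X=\lambda-\lambda=0$ and $X$ is co-closed, while $A_{12}-A_{21}=0$ together with $A_{0j}=A_{j0}$ gives $\dx X^\flat=0$, so $X$ is closed; being both, $X^\flat$ is harmonic. Since $\dx X^\flat=0$, Frobenius applied to $X^\perp=\ker X^\flat$ shows this distribution is integrable. Finally the symmetric part of $\na X$ is the non-zero block itself (the solutions produced in Proposition \ref{prop_nonexistenceminimalvolumevectorfield} being non-parallel), so $X$ is not Killing.

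For $\varphi_+$ the easy half is analogous: the antisymmetric part of $\na X$ is $\mu\,(e^{12}\text{-type})\neq0$, so $\dx X^\flat\neq0$, i.e. $X$ is not closed and hence not harmonic. The substantive claim, that $X$ is Killing, is equivalent to $\lambda\equiv0$ (which simultaneously yields co-closedness). To reach it I would write $\na_YX=\lambda\,\Pi Y+\mu\,(X\times Y)$, where $\Pi$ is the orthogonal projection onto $X^\perp$, differentiate this tensor, and feed it into the constant-curvature identity $R(Y,Z)X=c(\langle Z,X\rangle Y-\langle Y,X\rangle Z)$ through $R(Y,Z)X=(\na_Y\na X)(Z)-(\na_Z\na X)(Y)$. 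Evaluating on $(X,e_1)$ and $(X,e_2)$ collapses, after using (i), to the pair $X\lambda=\mu^2-\lambda^2-c$ and $X\mu=-2\lambda\mu$; writing $w=\lambda+\sqrt{-1}\mu$ these are a single Riccati equation $Xw=-w^2-c$ along the geodesic flow, whose equilibria are precisely $\lambda=0$. Evaluating on $(e_1,e_2)$ gives the transverse Cauchy--Riemann relations $(e_1+\sqrt{-1}e_2)w=0$.

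The hard part will be concluding $\lambda\equiv0$ from this. The algebraic conditions together with these first-order relations do not suffice: for $c>0$ the Riccati equation admits bounded non-equilibrium (periodic) solutions along closed geodesics, so the non-Killing branch is not excluded by boundedness alone. To kill it I would bring in the remaining, fully transverse curvature equations $R(e_1,e_2)e_1=-c\,e_2$ and $R(e_1,e_2)e_2=c\,e_1$, which over-determine the connection coefficients $\langle\na_{e_0}e_1,e_2\rangle,\ \langle\na_{e_1}e_1,e_2\rangle,\ \langle\na_{e_2}e_1,e_2\rangle$ and, combined with the Riccati and Cauchy--Riemann relations, should force $\lambda=0$; alternatively one invokes completeness/compactness or the explicit integration of the system in Proposition \ref{prop_nonexistenceminimalvolumevectorfield}. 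Once $\lambda\equiv0$ is established, $\na X$ is skew, so $X$ is Killing and therefore co-closed, completing the case.
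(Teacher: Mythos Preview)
Your treatment of the $\varphi_-$ case is essentially the paper's: from (i--ii) the block $(A_{ab})_{a,b=1,2}$ is symmetric and trace-free, hence $X$ is closed, co-closed, harmonic, not Killing (assuming the block is nonzero), and $X^\perp=\ker X^\flat$ is integrable. The paper verifies closedness via $\dx X^\flat=\sum_i e^i\wedge\na_{e_i}X^\flat$ and integrability via $\langle[Y,Z],X\rangle=-(1\pm1)\langle\na_YX,Z\rangle$, but the content is identical. (One small slip: unit norm gives only $A_{i0}=0$; it is hypothesis (i) that gives $A_{0j}=0$.)

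For $\varphi_+$ you have correctly isolated a point the paper glosses over. Conditions (i--ii) yield $A_{11}=A_{22}=:\lambda$ and $A_{12}=-A_{21}=:\mu$, so the block is \emph{scalar plus rotation}; Killing (and co-closedness, since $\mathrm{div}\,X=2\lambda$) is exactly $\lambda\equiv0$, and this does not follow from the algebra of (i--ii) alone. The paper's proof simply asserts ``the matrix of $A$ restricted to $X^\perp$ is skew-symmetric'' and writes $A=\bigl(\begin{smallmatrix}0&\lambda\\-\lambda&0\end{smallmatrix}\bigr)$ with no justification---but the matrix $\bigl(\begin{smallmatrix}\lambda&\mu\\-\mu&\lambda\end{smallmatrix}\bigr)$ commutes with every rotation of $(e_1,e_2)$, so no frame choice eliminates $\lambda$. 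Your plan to force $\lambda=0$ via the curvature identities and the resulting Riccati equation along the flow is a sensible attack, but as you acknowledge you have not carried it through; in particular, the Riccati system you write down does admit non-equilibrium solutions, so the transverse equations really are needed. In short: the paper's proof of the $\varphi_+$ Killing claim is at best incomplete at precisely the point you flag, and your proposal does not yet close that gap either.
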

\begin{proof}
We have $\na_XX=0$ and $\langle\na_\cdot X,X\rangle=0$. The matrix of $A$ restricted to $X^\perp$ is skew-symmetric and symmetric in the respective cases $\varphi_\pm$. In the positive case, there exists an orthonormal frame $Y,Z$ such that ${A=\scriptstyle{\left(\begin{array}{cc} 0 &\lambda \\ -\lambda &0 \end{array}\right)}}$. In the negative case, then $A$ has eigenfunctions $\lambda,-\lambda\neq0$. For both we may use the formula
 \[ \dx X^\flat= X^\flat\wedge\na_XX^\flat+   Y^\flat\wedge\na_YX^\flat+Z^\flat\wedge\na_ZX^\flat  .  \]
 To see if the orthogonal distribution to $X$ is integrable we are bound to check, for every $Y,Z$ orthonormal frame, that $\langle [Y,Z],X\rangle=-\langle\na_YX,Z\rangle+ \langle\na_ZX,Y\rangle=-(1\pm1)\langle\na_YX,Z\rangle$ vanishes. Which is obvious in one case.
 \end{proof}

 The following result generalizes to any curvature the well-known result from \cite{GluckZiller} for the 3-sphere. It has been proved by completely different means in \cite{GilMedrano2022,GilMedranoLlinaresFuster} and in \cite{DavilaVanhecke1,DavilaVanhecke2}.

\begin{teo} \label{Theorem_Hopfminimalforanycurvature}
A Hopf vector field on $\Omega\subset\Sphere^3(r)$ has minimal volume $(1+\frac{1}{r^2})\vol(\Omega)$ in its homology class. Such solution is unique in any given $\Omega$.

A global Hopf vector field has minimal volume $2\pi^2(r+r^3)$.
\end{teo}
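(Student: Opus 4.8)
The theorem has two assertions: a lower bound on volume realized by Hopf fields on any domain $\Omega\subset\Sphere^3(r)$, with uniqueness, and the explicit global value $2\pi^2(r+r^3)$. The natural route is to exploit the calibration $\varphi_+=\theta\wedge(\alpha_0+\alpha_2)$ together with Proposition \ref{Propfundamental} and the cohomological analysis of Section 2.3. First I would recall that on $\Sphere^3(r)$ we have constant sectional curvature $c=1/r^2$, and by Proposition \ref{omega_goodorientation}(iii) and the preceding discussion, $\varphi_+$ is a genuine closed calibration precisely in the positive-curvature case. Thus the fundamental inequality $\int_\Omega\vol_X=\int_{X(\Omega)}\varphi_+\le\int_\Omega\vol_{X'}$ holds for every competitor $X'$ homologous to $X$, reducing the minimization to exhibiting a calibrated submanifold.

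The heart of the argument is to identify the calibrated vector fields with Hopf fields. By Proposition \ref{Propfundamental} applied to $\varphi_+$, a vector field $X$ gives a $\varphi_+$-submanifold if and only if $\na_XX=0$ and, in any orthonormal frame $X,Y,Z$, one has $\langle\na_YX,Y\rangle=\langle\na_ZX,Z\rangle$ and $\langle\na_YX,Z\rangle=-\langle\na_ZX,Y\rangle$. I would show these equations force $A|_{X^\perp}=\left(\begin{smallmatrix}0&\lambda\\-\lambda&0\end{smallmatrix}\right)$, and then determine $\lambda$: combining the Killing property (established in the Proposition preceding the theorem) with the constant-curvature condition pins down $\lambda=1/r$ (equivalently $\sqrt{c}$), giving exactly the geodesic/Hopf flow described in \eqref{geodesic_flow_on_Sm}. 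Substituting $A_{(00)}=\lambda^2=1/r^2$ into the pointwise volume integrand $\sqrt{1+{A_{(00)}}^2+\cdots}$, which collapses by \eqref{normofvarphivectorfield} to $1+A_{(00)}=1+1/r^2$, yields the density and hence $\vol(X)=(1+\tfrac{1}{r^2})\vol(\Omega)$.

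For the global value I would take $\Omega=\Sphere^3(r)$, so $\vol(\Omega)=2\pi^2 r^3$ is the volume of the round $3$-sphere of radius $r$, and multiply: $(1+\tfrac{1}{r^2})\cdot 2\pi^2 r^3=2\pi^2(r^3+r)$, matching the claim. Uniqueness within a fixed $\Omega$ follows from the calibration equations being an equality case: any minimizer must itself be $\varphi_+$-calibrated, hence satisfy (i)--(ii), hence be Hopf; since the skew-symmetric normal form with $|\lambda|=1/r$ determines $\na X$ completely and $\na_XX=0$ fixes the flow, the solution is rigid up to orientation of the Hopf fibration. I would invoke Proposition \ref{cohomologous3formsc=1} to confirm that for $c=1$ (and by scaling, general $r$) the relevant class $\varphi_+\nsim 0$ is nontrivial, so the homology constraint is genuine and the minimizer is not merely a trivial competitor.

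The main obstacle I anticipate is the determination of $\lambda$ and the verification that the calibration equations \emph{admit} a solution on all of $\Sphere^3(r)$, i.e. global integrability of the distribution defined by $A|_{X^\perp}=\lambda J$. On a domain with boundary this is automatic once the pointwise algebra holds, but for the global statement one must check the moment/boundary conditions of \cite[(6.9)]{HarLaw} are vacuous and that the candidate $X$ integrates to a genuine Hopf field rather than a merely infinitesimal solution; this is where the explicit flow \eqref{geodesic_flow_on_Sm} and the Killing property do the real work, guaranteeing that the skew condition is compatible with the curvature identity globally on the homogeneous space.
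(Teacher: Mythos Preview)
Your approach matches the paper's: invoke the calibration $\varphi_+$, use Proposition~\ref{Propfundamental} to reduce to the algebraic conditions on $A_{ij}$, pin down $\lambda=1/r$ (you via the curvature identity $R(X,Z)X=-cZ$, the paper via the explicit Hopf-field computation \eqref{identitiesforHopfvf}--\eqref{identitiesforHopfvf2} in Section~4), and read off the density $1+A_{(00)}=1+1/r^2$. One small correction: $\varphi_+=\theta\wedge\omega$ is closed for \emph{every} $c$ (Proposition~\ref{omega_goodorientation}(iii) concerns closedness of $\omega$ itself, not of $\varphi_+$), and the appeal to Proposition~\ref{cohomologous3formsc=1} is superfluous---the calibration inequality needs only $\dx\varphi_+=0$ and comass~$1$, not any cohomological nontriviality.
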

\begin{proof}
 We start by uniqueness. Equations (i-ii) in Proposition  \ref{Propfundamental} yield $A_{0j}=0$, $A_{11}=A_{22}$, $A_{12}=-A_{21}$.
 Applying formula \eqref{normofvarphivectorfield} with our choice of $\varphi_+$, we find the 3-form $X^*\varphi_+$ is $(1+{A_{11}}^2+{A_{12}}^2)\vol_M$. Uniqueness comes from invariance (not from K\"ahler geometry as in the case $r=1$). Indeed, $A_{ij}$ must be constant in order to have minimality everywhere, ie. every $\Omega\subset\Sphere^3(r)$ with free boundary conditions.

 Those equations are satisfied by a Hopf vector field. Indeed we deduce in Section 4 that $A_{0j}=0$, $A_{11}=A_{22}=0$, $A_{12}=-A_{21}=\frac{1}{r}=\sqrt{c}$. These are the only possible values and every such vector field $X$ is a Hopf vector field.

 For the last assertion, we recall $\vol(\Sphere^3(r))=2\pi^2r^3$.
\end{proof}

Hopf vector fields are induced by linear complex structures $J_0$ on $\R^4$ compatible with the metric and orientation. On the 3-sphere, $X_x=J_0x$.

Recalling $\mathrm{Isom}_+(\Sphere^3)=\SO(4)$ the question for the quotient manifolds of the sphere arises. This has also deserved attention in \cite{BorZou} and again recently in \cite{GilMedrano2022}.
\begin{coro}
 Given a subgroup $\Gamma\subset\SO(4)$ acting freely on the 3-sphere, the manifold $\Sphere^3/\Gamma$ has as minimal vector fields the Hopf vector fields for which $[\Gamma,J_0]=0$.
\end{coro}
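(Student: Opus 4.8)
The plan is to reduce the problem on $\Sphere^3/\Gamma$ to the already-solved problem on $\Sphere^3$ by means of the Riemannian covering $p\colon\Sphere^3\lrr\Sphere^3/\Gamma$. Since $\Gamma\subset\SO(4)$ acts by orientation-preserving isometries, the quotient inherits the constant sectional curvature $1$ metric and $p$ is a local isometry. A unit vector field $\bar X$ on $\Sphere^3/\Gamma$ is then the same datum as a $\Gamma$-invariant unit vector field $X$ on $\Sphere^3$, namely the $\Gamma$-invariant lift of $\bar X$, where invariance means $\dx\gamma\circ X=X\circ\gamma$ for every $\gamma\in\Gamma$. The crucial observation is that the calibration condition for $\varphi_+$ from \eqref{veryclosedcalibration} is expressed, in Proposition \ref{Propfundamental}, by the purely pointwise equations $\na_XX=0$ together with the symmetry relations on $\na X$ restricted to $X^\perp$. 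Being local and metric in nature, these equations are preserved by the local isometry $p$; hence $\bar X$ is a $\varphi_+$-submanifold on $T^1(\Sphere^3/\Gamma)$ if and only if its lift $X$ is one on $T^1\Sphere^3$. Equivalently, one notes that $\varphi_+$ is $\SO(4)$-invariant and so descends to the quotient unit tangent bundle.

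First I would invoke the preceding Theorem: a $\varphi_+$-calibrated unit vector field on $\Sphere^3$ is necessarily a Hopf vector field, i.e.\ $X_x=J_0x$ for a linear complex structure $J_0$ on $\R^4$ compatible with metric and orientation. It then remains only to characterise which of these descend. Because each $\gamma\in\Gamma$ acts linearly, its differential at $x$ is $\gamma$ itself, so the invariance requirement $\dx\gamma(X_x)=X_{\gamma x}$ reads $\gamma J_0x=J_0\gamma x$ for all $x$, that is $\gamma J_0=J_0\gamma$. Running over all $\gamma\in\Gamma$ this is exactly $[\Gamma,J_0]=0$. Conversely, whenever $[\Gamma,J_0]=0$ the field $X_x=J_0x$ is $\Gamma$-equivariant and therefore descends to a globally defined unit vector field on $\Sphere^3/\Gamma$, which by the above is again a $\varphi_+$-submanifold and hence of minimal volume in its homology class. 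This establishes the asserted characterisation.

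The step I expect to require the most care is the passage between ``minimal on the quotient'' and ``minimal upstairs''. Using the pointwise equations of Proposition \ref{Propfundamental} sidesteps any global homology bookkeeping, but one should still confirm that $\bar X$ realises the minimum within its own class in $H_3\bigl(T^1(\Sphere^3/\Gamma)\bigr)$, and not merely that it satisfies the calibration equations; this follows from the fundamental inequality for calibrations once one checks that $\varphi_+$ descends as a closed form of comass $1$, which it does by $\SO(4)$-invariance and the naturality of $\theta,\alpha_0,\alpha_2$. A secondary point worth a remark is that the hypothesis that $\Gamma$ act \emph{freely} is what guarantees $\Sphere^3/\Gamma$ is a manifold to which the Theorem applies, and that the constraint $[\Gamma,J_0]=0$ may be empty for some $\Gamma$, namely when $\Gamma$ fails to lie in the centraliser of any admissible $J_0$, in which case $\Sphere^3/\Gamma$ simply carries no Hopf vector field of this invariant minimal type.
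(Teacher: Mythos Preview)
Your proof is correct and follows the same approach as the paper, which merely observes in one sentence that the canonical connection, the Sasaki metric, and the fundamental differential system on $T^1M$ are all invariant by isometries, so the calibration theory descends to the quotient. You have fleshed out this sketch carefully, including the explicit translation of $\Gamma$-equivariance of a Hopf field into the commutation condition $[\Gamma,J_0]=0$, which the paper leaves implicit.
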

The proof follows from the same theory of calibrations which gave Theorem \ref{Theorem_Hopfminimalforanycurvature}, since the Levi-Civita connection on $M$, the Sasaki metric and the fundamental differential system on $T^1M$ are all invariant by isometries, for any oriented Riemannian manifold $M$.

\cite{BorZou} proved that every finite subgroup $\Gamma$ acting freely and by isometries on the sphere admits a $J_0$ such that $\Gamma<\mathrm{U}(\C^2,J_0)$, this is $[\Gamma,J_0]=0$.


Proposition \ref{Propfundamental} gives also a solution to the case of the flat metric (obviously the parallel vector fields). Indeed, notice we may use both $\varphi_-\sim\varphi_+\nsim0$ in this case, cf. Proposition \ref{cohomologous3formscneq1} (iii).

\begin{center}
\textsc{3.2 -- No solutions}
\end{center}

The above result of a solution of the minimal volume vector field on $\Sphere^3$ as the geodesic flow of the 2-sphere, as well as the appearance in the hyperbolic case of a system of differential equations (i-ii) leading to strong similarity with Anosov flows, which furthermore require a \textit{hyperbolic set for the flow}, cf. \cite[Definition 17.14.1]{KatokHasel}, and yield precisely that a geodesic flow on hyperbolic metric is Anosov, cf. \cite[p. 346]{Chavel}\cite{KatokHasel}, leads us to ask if, for the case $c=-1$, a local solution $X$ of minimal volume on $\Hyperbolic^3$ exists as the geodesic flow of some unknown surface yet to be discovered. However, no such analogy is true.

\begin{prop}  \label{prop_nonexistenceminimalvolumevectorfield}
On a given domain of a negative constant sectional curvature manifold there exists no vector field $X$ which is a solution of (i-ii) from Proposition \ref{Propfundamental}.
\end{prop}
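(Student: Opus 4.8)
The plan is to specialise to the calibration $\varphi_-$, which is the one relevant to negative curvature (by Proposition \ref{omegaclosedbadorientation}), and to show that conditions (i--ii) of Proposition \ref{Propfundamental} over-determine the Levi-Civita connection, producing two incompatible values for the sectional curvature. Recall that in the case $\varphi_-$ the endomorphism $\na_\cdot X$ restricted to $X^\perp$ is symmetric and trace-free, with eigenvalues $\pm\lambda$; choosing at each point a unit eigenframe I obtain a local orthonormal frame $X,Y,Z$ of $M$ with
\begin{equation*}
 \na_XX=0,\qquad \na_YX=\lambda Y,\qquad \na_ZX=-\lambda Z.
\end{equation*}
Since $\|X\|=1$ gives $\langle\na_\cdot X,X\rangle=0$, the remaining connection data are governed by $p=\langle\na_YY,Z\rangle$, $q=\langle\na_ZZ,Y\rangle$ and $b=\langle\na_XY,Z\rangle$; metric compatibility forces the rest, e.g. $\na_YY=-\lambda X+pZ$, $\na_YZ=-pY$, $\na_ZZ=\lambda X+qY$, $\na_ZY=-qZ$, and $\na_XY=bZ$, $\na_XZ=-bY$.

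First I would compute the two ``radial'' sectional curvatures from this frame and equate them to $c$. A direct expansion of $\langle R(Y,X)X,Y\rangle$ and $\langle R(Z,X)X,Z\rangle$ gives $-(X\lambda)-\lambda^2$ and $(X\lambda)-\lambda^2$ respectively, the terms containing $b$ dropping out upon projection. Setting both equal to $c$ yields $X\lambda=0$ and $\lambda^2=-c$. Since $\lambda^2=-c$ is a pointwise identity on the whole domain, $\lambda$ is locally constant, so $Y\lambda=Z\lambda=0$; moreover $\lambda^2=-c>0$ gives $\lambda\neq0$, which legitimises the smooth splitting of $X^\perp$ into eigenlines used above.

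Next I would exploit the mixed curvature. In constant curvature $R(Y,Z)X=c(\langle Z,X\rangle Y-\langle Y,X\rangle Z)=0$, whereas expanding $R(Y,Z)X$ in the frame gives $(2\lambda p-Z\lambda)Y+(2\lambda q-Y\lambda)Z$. As $\lambda$ is constant and nonzero, this forces $p=q=0$, so that $\na_YY=-\lambda X$, $\na_ZZ=\lambda X$ and $\na_YZ=\na_ZY=0$, whence also $[Y,Z]=0$. With this simplified connection I would finally compute the ``tangential'' curvature: $R(Y,Z)Z=\na_Y(\lambda X)=\lambda^2Y$, so that $\langle R(Y,Z)Z,Y\rangle=\lambda^2$, which must again equal $c$. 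Thus $\lambda^2=c$ and $\lambda^2=-c$ hold simultaneously, forcing $c=0$ and contradicting $c<0$. The companion case $\varphi_+$ (where $\na_\cdot X$ is skew on $X^\perp$) is even shorter: the radial computation already gives $\lambda^2=c$, impossible for $c<0$.

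The only genuine subtlety is the legitimacy and constancy of the eigenframe: one needs $\lambda\neq0$ both to split $X^\perp$ smoothly into eigenlines and to conclude $p=q=0$ from the mixed curvature, and this is exactly what the radial identity $\lambda^2=-c>0$ provides, after which the argument is a forced comparison of two sectional curvatures. Geometrically the content is transparent: conditions (i--ii) make the integral curves of $X$ geodesics and the orthogonal leaves minimal surfaces with principal curvatures $\pm\lambda$; the radial (focal) behaviour demands $\lambda^2=-c$, while the ambient sectional curvature along the leaves demands $\lambda^2=c$, and these agree only in the Euclidean case $c=0$.
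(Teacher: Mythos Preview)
Your proof is correct and follows essentially the same route as the paper's own argument: both set up the eigenframe $X,Y,Z$ for the symmetric trace-free endomorphism $\nabla_{\cdot}X|_{X^\perp}$, extract $\lambda^2=-c$ from the radial curvatures $R(\,\cdot\,,X)X$, kill the off-diagonal connection coefficients via $R(Y,Z)X=0$, and then reach the contradiction $\lambda^2=c$ from the tangential curvature $R(Y,Z)Z$ (the paper uses $R(Y,Z)Y$). Your version is slightly more streamlined in that you work with general $c<0$ and use the constancy of $\lambda$ directly rather than computing the auxiliary components $R(X,Y)Z$, $R(X,Z)Y$; the $\varphi_+$ case is likewise the same one-line radial contradiction.
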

\begin{Rema}
 Equations (i-ii) are invariant under rotation of $Y,Z$ in both cases $\pm$.
\end{Rema}
\begin{proof}[A wise proof just for case $\varphi_-$]
 Suppose a solution exists on a non-empty open subset. Notice $X$ is locally a gradient, since $X^\flat$ is closed, and moreover $X$ is harmonic. Then a well-known result of Cartan, owing to the search for isoparametric hypersurfaces, states that there do not exist harmonic functions with gradient of constant norm on hyperbolic space, cf. \cite{Plaue} and the references therein.
\end{proof}
\begin{proof}[Proof of both cases]
 Let us see $\varphi_-$ first, again.
 Suppose $X$ exists on an open subset $\neq\emptyset$. 
 Recall $\na_XX=0$ and $\langle\na_\cdot X,X\rangle=0$. The matrix of $A$ restricted to $X^\perp$ is symmetric and has eigenfunctions $\lambda,-\lambda\neq0$. For two local orthonormal vector fields $Y,Z$ in $X^\perp$, we have $\na_YX=\lambda Y$, $\na_ZX=-\lambda Z$ (this is the kind of phenomena observed with an Anosov flow, cf. \cite[Chapter 17.4]{KatokHasel}). Then it follows that $\na_YZ=\mu Y$ and $\na_ZY=\nu Z$ for some functions $\mu,\nu$, and therefore $\na_YY=-\mu Z-\lambda X$ and $\na_ZZ=\lambda X-\nu Y$. In the same way we find $\na_XY=pZ$, $\na_XZ=-pY$ for some function $p$. Regarding the Lie brackets, we get
 \[  [X,Y]=pZ-\lambda Y,\quad [X,Z]=-pY+\lambda Z,\quad [Y,Z]=\mu Y-\nu Z. \]
 Then $R(X,Y)X=\dx\lambda(X)Y+2\lambda pZ+\lambda^2 Y$ and, from constant $-1$ sectional curvature tensor, it follows that $p=0$ and $\dx\lambda(X)+\lambda^2=1$. From $R(X,Y)Z=\dx\mu(X)Y+\lambda\mu Y=0$, follows $\dx\mu(X)+\lambda\mu=0$. As with $Y$, we have $R(X,Z)X=-\dx\lambda(X)Z+\lambda^2Z$, and therefore we find now $-\dx\lambda(X)+\lambda^2=1$. We conclude $\lambda^2=1$. Continuing, from $R(X,Z)Y=\dx\nu(X)Z-\lambda\nu Z$, follows $\dx\nu(X)-\lambda\nu=0$. From $R(Y,Z)X=-2\mu\lambda Y-2\nu\lambda Z=0$, comes that $\mu=\nu=0$. Finally, a last deduction $R(Y,Z)Y=-\lambda^2Z$ implies that $\lambda^2=-1$.

 Now the case $\varphi_+$. We suppose $X$ exists such that $\na_XX=0$ and that for a local orthonormal frame $Y,Z$ of $X^\perp$ we have now $\na_YX=\lambda Z,\ \na_ZX=-\lambda Y$ for some function $\lambda$. We also see $\na_XZ=\mu Y$ and $\na_XY=-\mu Z$ for some function $\mu$. Hence $[X,Z]=\mu Y+\lambda Y$ and it follows that $R(X,Z)X=-\dx\lambda(X)Y-\lambda^2Z$ and therefore $\lambda^2=-1$, a same contradiction as above.
\end{proof}
It is not proved above that there do not exist minimal volume vector fields e.g. on a constant hyperbolic manifold of finite volume, but it seems reasonable to conjecture so. We may conjecture the infimum $(1-c)\vol(M)$, the lower bound for compact $M$ found in \cite[Corollary 4]{BritoChaconNaveira} (cf. also \cite[Proposition 13]{GilMedranoLlinaresFuster}).

If the circle ${b_0}^2+{b_1}^2=1$ of calibrations from Proposition \ref{circleofcalibrations} may still yield the right equations for the problem of minimal volume on hyperbolic ambient, then we should probably notice that always some $\varphi_t\sim\varphi_+$, for some $t$, and that $\varphi_+$ has no solutions.

Case $c=-1$ is specially twisted, since $\varphi_{\frac{\pi}{2}}=\theta\wedge\alpha_1=\dx\alpha_0\sim0\sim\varphi_+$. Although this case $b_0=0,\ \pm b_1=1$ leads to complicated conditions on $X$ in general, we shall give a simple solution in Section 5.


\begin{center}
\textsc{3.3 -- The original idea for $\Sphere^3(1)$}
\end{center}

Let us finally clarify the coincidence of our system of differential forms on $T^1\Sphere^3(1)=V_2(\R^4)$ with that one of Gluck and Ziller in the case of the 3-sphere of radius 1. Recall their finding of two closed 2-forms $\tau_1,\tau_2$, mutually orthogonal pullbacks of the K\"ahler forms of two $\Sphere^2$, and from which a calibration is defined:
\begin{equation}
    \varphi=\theta\wedge(\tau_1-\tau_2) .
\end{equation}
More explicitly $\tau_1=f^{12}$,\ $\tau_2=f^{34}$ arise from a coframe of horizontals $f^1,f^2$ and verticals $f^3,f^4$, askew to the usual Sasaki splitting of $e_0^\perp$. Now letting
\[   f^1=e^1+e^4,\quad f^2=e^2-e^3,\quad f^3=e^2+e^3,\quad f^4=e^1-e^4 ,  \]
it is easy to see
\[  \tau_1= f^{12} =  \alpha_0 + \alpha_2 + \dx\theta \qquad
-\tau_2=- f^{34} =  \alpha_0 + \alpha_2 - \dx\theta .   \]
By \ref{omega_c}, these are closed forms. Moreover,
$\varphi=\theta\wedge(\tau_1-\tau_2)=2\theta\wedge(\alpha_0+\alpha_2)=2\varphi_+$ from \eqref{veryclosedcalibration}.

Hence the calibrations are essentially the same.

\begin{center}
\subsection*{4 -- On the geodesic flow and Hopf vector fields}
\end{center}

We think of the non-Euclidean space forms as hypersurfaces of the Euclidean or Lorentzian space $\R^{m+1}$. Defining
\begin{equation}
 \langle\ ,\ \rangle_\pm=\pm(\dx x^1)^2+(\dx x^2)^2+\cdots+(\dx x^{m+1})^2
\end{equation}
we let $M_+=\Sphere^m(r)=\{x:\ \langle x,x\rangle_+=r^2\}$ and $M_-=\Hyperbolic^m(r)=\{x:\ \langle x,x\rangle_-=-r^2,\ x^1>0\}$, where $r>0$. These metrics restrict to positive definite metrics in both subspaces. They are also given as the invariant metrics. In particular, $M_-=\SO_0(1,m)/\SO(m)$.

Next we prove a result which is asserted in the case of spheres. The proof would seem quite immediate through the exponential map, but we wish to see it in some detail.

Let $x\in M_\pm$ and $(x,y)\in T^1_xM_{\pm}$. Hence $y$ is a unit vector, $\langle y,y\rangle_\pm=1$, orthogonal to $x$.

\begin{teo} \label{Theorem_velocity_proof}
The geodesic spray vector fields $e_0$ of $M_+$ or $M_-$ along the curves in $T^1M_+$ and $T^1M_-$, respectively,
\begin{equation}  \label{twogeodesicflowcurves}
\begin{split}
 &  g_t(x,y)=(x\cos t+yr\sin t,-x\frac{\sin t}{r}+y\cos t), \\
 &  g_t(x,y)=(x\cosh t+yr\sinh t,x\frac{\sinh t}{r}+y\cosh t) ,
\end{split}
\end{equation}
agree with $\frac{1}{r}$ times the velocity of these same curves.
\end{teo}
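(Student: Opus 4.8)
The plan is to verify directly that each curve $g_t(x,y)$ lies in the appropriate unit tangent bundle $T^1M_\pm$ and that its velocity, divided by $r$, coincides with the geodesic spray $e_0$. The most economical route is to show two things: first, that $t\mapsto \pi(g_t(x,y))$ is a unit-speed (after rescaling) geodesic of $M_\pm$ passing through $x$ with initial velocity $y$; and second, that the tangent-bundle curve $g_t$ is exactly the canonical lift of that base geodesic, i.e. its vertical-horizontal decomposition has no vertical part. Since $e_0$ is characterized in Section 2 as the unique horizontal vector with $\dx\pi_u(e_0)=u$, establishing that $g_t$ is horizontal with the correct projection is precisely what is needed.

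First I would confirm membership in $T^1M_\pm$. Writing $g_t(x,y)=(\gamma(t),\dot{\text{-data}})$ with the two components called $a(t)$ and $b(t)$, I would check the three scalar identities $\langle a,a\rangle_\pm=\pm r^2$ (so $a\in M_\pm$), $\langle b,b\rangle_\pm=1$ (unit tangent), and $\langle a,b\rangle_\pm=0$ (tangency to $M_\pm$). In the spherical case these reduce to $\cos^2 t+\sin^2 t=1$ and the analogous mixed terms cancelling; in the hyperbolic case they reduce to $\cosh^2 t-\sinh^2 t=1$, using crucially the sign flip in $\langle\ ,\ \rangle_-$. This is the routine bookkeeping that makes the curves well-defined.

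The substantive step is the geodesic and horizontality claim. The base curve is $a(t)=x\cos t+yr\sin t$ (resp. $x\cosh t+yr\sinh t$), so $\dot a=-x\sin t+yr\cos t$ (resp. $x\sinh t+yr\cosh t$), which is $r$ times the second component $b(t)$; this already identifies $b(t)=\frac1r\dot a(t)$ as the (rescaled) velocity field along $\gamma$. To see $\gamma$ is a geodesic of $M_\pm$, I would compute the ambient acceleration $\ddot a$ and check it is normal to $M_\pm$: in the sphere $\ddot a=-a$, which is radial hence normal; in the hyperbolic case $\ddot a=a$, again proportional to the position vector and thus normal to $M_-$ under $\langle\ ,\ \rangle_-$. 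Vanishing tangential acceleration is exactly the geodesic equation for the induced Levi-Civita connection. Consequently the tangent-bundle curve $t\mapsto(\gamma(t),\frac1r\dot\gamma(t))$ is the natural lift of a geodesic, which is horizontal with projection equal to its own base velocity, so $\frac1r$ times its velocity satisfies the defining property $\dx\pi(e_0)=u$ of the geodesic spray.

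\textbf{The main obstacle} I anticipate is keeping the Lorentzian signs and the radius factor $r$ straight simultaneously: the identification of the normal direction and the geodesic equation both hinge on $\ddot a$ being a multiple of $a$ with the correct sign, and a single sign slip in $\langle\ ,\ \rangle_-$ would spoil the orthonormality relations or the claim $\ddot a=+a$. A secondary subtlety is the placement of the factor $r$: because the second component is defined with a $\frac1r$ (e.g.\ $-x\frac{\sin t}{r}+y\cos t$), one must check that the velocity of $g_t$ as a curve in $T M_\pm$, not merely the base velocity $\dot a$, yields $e_0$ after dividing by $r$; this amounts to verifying the vertical part $\frac{D}{dt}b(t)$ of the lift vanishes, i.e.\ that $b(t)$ is parallel along $\gamma$, which follows because $b=\frac1r\dot\gamma$ is the velocity of a geodesic and is therefore automatically parallel.
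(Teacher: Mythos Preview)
Your argument is correct, and it takes a genuinely different route from the paper. The paper works entirely in ambient coordinates: it describes $T(TM)\subset(\R^{m+1})^4$, computes the induced Levi--Civita connection $\na_XY=\dx Y(X)\pm\langle X,Y\rangle\frac{x}{r^2}$, derives the horizontal subspace as $\{(x,y,U_1,U_2):\ r^2U_2\pm\langle U_1,y\rangle x=0\}$, reads off $e_0=(x,y,y,\mp\frac{x}{r^2})$ explicitly, and then just differentiates $g_t$ and matches the four entries. You instead invoke the intrinsic characterization of $e_0$: you check that $\gamma(t)=\pi(g_t)$ has $\ddot\gamma$ normal to $M_\pm$ (hence is a geodesic), observe that the second slot is $\frac1r\dot\gamma$, and conclude that the vertical part $\frac{D}{dt}b$ of the lift velocity vanishes, so the velocity is horizontal with $\dx\pi$-image $\dot\gamma=r\,b$, i.e.\ equals $r\,e_0$. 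Your route is shorter and more conceptual; the paper's route is a straightforward coordinate match but has the side benefit of producing the explicit formulas for the horizontal distribution and for $e_0$ in ambient coordinates, which the paper then reuses in the next Proposition on isometries of the geodesic flow. One small wording correction: $\gamma$ is not unit-speed but speed-$r$ (since $\dot\gamma=r\,b$ with $\|b\|=1$); this is exactly why the factor $\frac1r$ appears, and you handle it correctly in the final step.
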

\begin{proof}
We have $\pi:TM_\pm\lrr M_\pm$, $\pi(x,y)=x$, where $\langle x,x\rangle_\pm=\pm r^2$. Differentiating this equation, it follows that $x\perp_\pm y$. Next we let drop the $\pm$ where there is no fear of confusion.

In the same way, it is easy to see
\begin{equation*}
 T(TM)=\bigl\{(x,y,u,v)\in(\R^{m+1})^4:\ \langle x,x\rangle=\pm r^2,\ \langle x,y\rangle=\langle u,x\rangle=\langle u,y\rangle+\langle v,x\rangle=0\bigr\}.
\end{equation*}
Vertical vectors are those for which $u=0$. The tautological vector field $\xi$ is clearly given by $\xi_{(x,y)}=(x,y,0,y)$. The mirror map $B\in\End{TTM}$ is simply the map $B(x,y,u,v)=(x,y,0,u)$.

The Levi-Civita connections of the given hypersurfaces are found to be, respectively, $\na_XY=\dx Y(X)\pm\langle X,Y\rangle \frac{x}{r^2}$ at a point $x\in M$. Indeed well-defined, it is the metric torsion-free connection. Invariant under the respective $\SO(m+1)$ and $\SO_0(1,m)$.

Notice we have started to suppress the base point in $TM$; this will be even more considerable when referring to $TTM$.

A simple computation yields $R^\na(X,Y)Z=\pm\frac{1}{r^2}(\langle Y,Z\rangle X-\langle  X,Z\rangle Y)$, and hence the conclusion that $M_\pm$ has constant sectional curvature $c=\pm\frac{1}{r^2}$.

Regarding the tangents just to $T^1M_\pm$ we have, moreover, $\langle y,y\rangle_\pm=\|y\|^2=1$, $\langle y,v\rangle=0$. We then lift the Levi-Civita connection of $M$ to a metric connection $\na^*$ over $(TM,g^S)$, by taking the pullback connection to $\pi^\star TM=\ker\dx\pi$, the vertical subspace. For any vector field $\fraku$ over $TM$, we have
\begin{equation*}
 \fraku_{(x,y)}=(x,y,U_1,U_2) \quad\mbox{and}\quad \dx\pi(\fraku_{(x,y)})=(x,U_1)
\end{equation*}
and thus
\begin{equation*}
 \na^*_\fraku\xi=(x,y,0,U_2\pm\langle U_1,y\rangle \frac{x}{r^2}).
\end{equation*}
One verifies that $\langle\xi,\na^*_\fraku\xi\rangle=\langle y,U_2\rangle=0$ over $T^1M$, as expected. The formula also gives the horizontal subspace as the kernel:
\begin{equation}\label{eqhorizontal}
 \fraku\in H^\na\quad\mbox{if and only if}\quad r^2U_2\pm\langle U_1,y\rangle x=0.
\end{equation}
It is  easy to find the horizontal and vertical lifts of $X=(x,U)$ tangent to $M$ at $x$. For instance, the horizontal lift of $X$ is $\pi^*X=(x,y,U,\mp\langle U,y\rangle\frac{x}{r^2})$. As expected, $B\pi^*X=\pi^\estrela X$.

Finally the geodesic spray $e_0=B^\mathrm{t}\xi$ equals $e_0=(x,y,y,\mp\frac{x}{r^2})$.
Notice the defining conditions are satisfied: $e_0$ is horizontal and $\dx\pi({e_0}_{X_x})=X_x$, for any $X$ tangent to $M$ at $x$.

Recalling \eqref{geodesic_flow_on_Sm}, which exists in any dimension,
we may easily deduce, that at any point $g_t(x,y)=(x\cos t+yr\sin t,-x\frac{\sin t}{r}+y\cos t)$ lies in $T^1\Sphere^m(r)$. And secondly, that
\begin{eqnarray*}
 \papa{ }{t}g_t(x,y)&=
 &(-x\sin t+yr\cos t,-x\frac{\cos t}{r}-y\sin t) \\
 &=&  r\bigl(-x\frac{\sin t}{r}+y\cos t, -\frac{1}{r^2}(x\cos t+yr\sin t)\bigr)
 \\ &=& r e_0\,_{|_{g_t(x,y)}}
 \end{eqnarray*}

The hyperbolic case is just as easy: $g_t(x,y)$ lies in $T^1\Hyperbolic^m(r)$ and $\papa{ }{t}g_t(x,y)= r e_0\,_{|_{g_t(x,y)}}$.
\end{proof}

We have again the well-defined and non-singular projections
\begin{equation}
   V_{2,\pm}(\R^{m+1})=T^1M \lrr \mathrm{Gr}_2(\R^{m+1}), \quad (x,y)\longmapsto x\wedge y,
\end{equation}
with kernel the curves in \eqref{twogeodesicflowcurves}.

We have the $\SO(2)$ action on $T^1M_+$ arising from the geodesic flow. This action is by isometries, as pointed in \cite{GluckZiller}, for the case $r=1$. Moreover,  it is asserted in \cite{Chavel} that the action of the general geodesic flow is by isometries if and only if we are in the case of sectional curvature 1. We shall confirm this quite explicitly.

Notice that the geodesic flow of hyperbolic metric is not always complete, since it must lie in $\{x^1>0\}$.

Following the definition in Theorem \ref{Theorem_velocity_proof} we still observe the (pseudo) Lie group action of $\R$ on $T^1M_-$. Indeed the identities $g_0=1,\ g_{t_1+t_2}=g_{t_1}\circ g_{t_2}$ hold true for any $t_1,t_2\in\R$.

\begin{teo}   \label{Theorem_wideraction}
 The Lie groups $\SO(m+1)$ and $\SO_0(m,1)$ act by isometries on $T^1\Sphere^m(r)$ and $T^1\Hyperbolic^m(r)$, respectively, leaving the fundamental differential system from \cite{Alb2019b} invariant.

 The actions of $\SO(2)$ and $\R$ as geodesic flows commute with the previous respective symmetric space structures. These actions are by isometries if and only if we are in the case of $\Sphere^m(1)$.
\end{teo}
\begin{proof}
 Any isometry of any given Riemannian manifold leaves the fundamental differential system invariant. This stems from the Levi-Civita connection and therefore the Sasaki metric being invariant by isometry.

  We are left with the proof of the second assertion. Since $g_t$ acts linearly, at a point $(x,y)$ we have $\dx g_t(u,v)=g_t(u,v) \in T^1_{p_{1}(g_t(x,y))}M$. With $X=(x,U),\ x\perp U$, we have
 \[ \dx g_t(\pi^*X)=g_t(U,\mp\langle U,y\rangle\frac{x}{r^2})=
  \bigl(Uc\mp\langle U,y\rangle\frac{x}{r}s,\mp U\frac{s}{r}\mp\langle U,y\rangle   \frac{x}{r^2}c\bigr) , \]
 where $c=\cos t,\ s=\sin t$ in the elliptic case and $c=\cosh t,\ s=\sinh t$ in the hyperbolic case. At point $g_t(x,y)=(xc+yrs,\mp x\frac{s}{r}+yc)$, the vertical part is, cf. \eqref{eqhorizontal}:
 \begin{eqnarray*}
  \lefteqn{ \mp U\frac{s}{r}\mp\langle U,y\rangle
  \frac{x}{r^2}c\pm\frac{1}{r^2}\Bigl\langle Uc\mp\langle U,y\rangle\frac{x}{r}s,\mp x\frac{s}{r}+yc\Bigr\rangle(xc+yrs) \ \ }\\
  &=& \mp U\frac{s}{r}\mp\langle U,y\rangle\frac{c}{r^2}x\pm\frac{c^2}{r^2} \langle U,y\rangle(xc+yrs)+\frac{s^2}{r^2}\langle U,y\rangle(xc+yrs) \\
  &=& \mp\frac{s}{r}(U-\langle U,y\rangle y).
 \end{eqnarray*}
Now for verticals $(x,y,0,U_2),\ U_2\perp x,y$, we have $\dx g_t(0,U_2)=(U_2rs,U_2c)$, which has vertical part
\[  U_2c\pm\frac{1}{r^2}\langle U_2rs,\mp x\frac{s}{r}+yc\rangle(xc+yrs)=U_2c  \]

Finally, with any $U,U_1\perp x$ giving horizontal vector fields and any $U_2,U_3$ giving verticals:
 \begin{equation*}
  \langle \dx g_t(U,\mp\langle U,y\rangle\frac{x}{r^2}),\dx g_t(U_1,\mp\langle U_1,y\rangle\frac{x}{r^2}) \rangle = (c^2+\frac{s^2}{r^2})\langle U,U_1\rangle+ (\pm s^2-\frac{s^2}{r^2})\langle U,y\rangle\langle U_1,y\rangle,
 \end{equation*}
\[   \langle\dx g_t(0,U_2),\dx g_t(0,U_3)\rangle = (r^2s^2+c^2)\langle U_2,U_3\rangle \]
and
 \begin{equation*}
  \langle\dx g_t(U,\mp\langle U,y\rangle\frac{x}{r^2}),\dx g_t(0,U_2)\rangle = (crs\mp\frac{cs}{r})\langle U,U_2\rangle .
 \end{equation*}
 $g_t$ is an isometry for the Sasaki metric if and only if the three formulas above are, respectively, equal to $\langle U,U_1\rangle,\ \langle U_2,U_3\rangle$ and $0$. The result follows immediately.
\end{proof}

When $m=2k+1$ is odd we have further understanding of the geodesic flow.

We see it by means of the Hopf vector fields. Let $J_0\in\End{\R^{2k+2}}$ be a linear complex structure compatible with the metric. It gives rise to a Hopf vector field, defined, at each $x\in\Sphere^m(r)$, by $X=\frac{1}{r}J_0x$. We have omitted the base point on the target manifold. Clearly
\begin{equation}\label{identitiesforHopfvf}
 \na_{W}X=\dx X(W)+\langle X,W\rangle \frac{x}{r^2}=\frac{1}{r} J_0W+\langle X,W\rangle \frac{x}{r^2}
\end{equation}
and so $\na_XX=0$. Finally, for any $Y$ such that $Y,J_0Y\perp \{x,X\}$, we have successively
\begin{equation}\label{identitiesforHopfvf2}
 \na_YX=\frac{1}{r} J_0Y,\qquad \langle\na_YX,Y\rangle=0,\qquad\langle\na_YX,J_0Y\rangle=\frac{1}{r}\|Y\|^2=-\langle\na_{J_0Y}X,Y\rangle.
\end{equation}
Reciprocally, a vector field $X$ on the sphere which satisfies \eqref{identitiesforHopfvf2} must be a Hopf vector field.

These equations are thus very useful in regard to Proposition \ref{Propfundamental}.

In the case of hyperbolic space, we \textit{could} follow exactly the same steps taking $J_0$ compatible with the metric. Again $X=J_0x\perp x\in\Hyperbolic^{2k+1}$. And $\na_{W}X=J_0W-\langle X,W\rangle x$. And all the identities in \eqref{identitiesforHopfvf2} would be accordingly satisfied. However, such a $J_0$ does not exist in $\SO(1,2k+1)$.

\begin{center}
\subsection*{5 -- Towards minimal vector fields on hyperbolic metric}
\end{center}

Let us now assume that $M$ is any given oriented Riemannian 3-manifold, not necessarily of constant sectional curvature. Regarding the $\SO(2)$ decomposition of the 10-dimensional representation space $\Lambda^3\R^5\simeq\Lambda^2\R^5$, the 1st-order structure equations satisfy the following, cf. \cite{Alb2018a}:
\begin{equation}\label{equacoesestruturaisgerais}
 \dx\alpha_0=\theta\wedge\alpha_1,\qquad\dx\alpha_1=2\theta\wedge\alpha_2-r\,\theta\wedge\alpha_0,\qquad\dx\alpha_2=\theta\wedge\gamma-\frac{r}{2}\,\theta\wedge\alpha_1+\alpha_0\wedge\rho,
\end{equation}
where $\rho$ is the vertical pullback
\begin{equation}
 \rho=\xi\lrcorner\pi^{\star}\ric=R_{1012}e^4-R_{2012}e^3
\end{equation}
and $r$ is the function on $T^1M$ defined by
\begin{equation}
 r(u)=\ric(u,u),\ \ \ u\in T^1M.
\end{equation}
Regarding $\gamma$, we refer to \cite{Alb2018a,Alb2019b} for the definition and other results. We recall the above identities have all been proved in two different ways.

Proposition \ref{Propfundamental} together with \eqref{equacoesestruturaisgerais} yields a remarkable result.
\begin{teo}
 For every oriented Riemannian 3-manifold $M$, the total space $T^1M$ carries two natural degree 3 calibrations (up to a $\pm$ sign):
\begin{equation}
 \theta\wedge\alpha_0\qquad\mbox{and}\qquad\dx\alpha_0=\theta\wedge\alpha_1 .
\end{equation}
\end{teo}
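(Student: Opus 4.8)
The plan is to check the two defining properties of a calibration—closedness and comass one—separately, and to exploit the fact that only the former is sensitive to the curvature of $M$, the comass being an entirely pointwise matter already settled in Section 2.2.

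First I would dispose of the comass. Since it is computed fibrewise over $T^1M$ in the adapted orthonormal coframe $e^0,\ldots,e^4$, the algebraic identities \eqref{thealphaiwithadaptedframe} hold verbatim for an arbitrary oriented Riemannian $3$-manifold, and so does the comass analysis of Proposition \ref{circleofcalibrations}. For $\theta\wedge\alpha_0=e^{012}$ the claim is immediate: it is the exterior product of three members of an orthonormal coframe, hence of comass one, equality being attained on $e_0\wedge e_1\wedge e_2$. For $\theta\wedge\alpha_1$ I would remark that it is the point $b_0=b_2=0,\ b_1=1$ on the circle of Proposition \ref{circleofcalibrations}, so it has comass one; equivalently, one checks that wedging a unit covector $\theta$ orthogonal to $e_0^\perp$ with a $2$-form $\omega$ supported on $e_0^\perp$ leaves the comass unchanged, together with the elementary fact that the symplectic form $\alpha_1=e^{14}-e^{23}$ has comass one.

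The real content is closedness for a general base. The decisive observation is that the first equation of \eqref{equacoesestruturaisgerais}, namely $\dx\alpha_0=\theta\wedge\alpha_1$, carries no curvature term: it is literally the same identity as in the space-form case. Hence $\theta\wedge\alpha_1=\dx\alpha_0$ is exact, and in particular closed, on every $T^1M$. For $\theta\wedge\alpha_0$ I would compute $\dx(\theta\wedge\alpha_0)=\dx\theta\wedge\alpha_0-\theta\wedge\dx\alpha_0$, where the second summand equals $\theta\wedge\theta\wedge\alpha_1=0$, and the first vanishes because $\dx\theta=e^{31}+e^{42}$ meets $\alpha_0=e^{12}$ only through repeated indices. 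Thus both forms are closed regardless of the geometry of $M$, and combined with the pointwise comass-one property this yields the calibration property; Proposition \ref{Propfundamental} then describes the corresponding calibrated vector fields.

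The step I expect to be the genuine obstacle is conceptual rather than computational: recognising precisely why these two, and essentially only these two, candidates survive once $M$ ceases to be a space form. I would stress that among the invariant forms $\theta\wedge\alpha_i$ only those whose closedness rests on the curvature-free identities $\dx\theta=e^{31}+e^{42}$ and $\dx\alpha_0=\theta\wedge\alpha_1$ remain closed; by contrast $\theta\wedge\alpha_2$ fails, since $\dx\alpha_2$ now contains the genuinely curvature-dependent term $\alpha_0\wedge\rho$ and $\theta\wedge\alpha_0\wedge\rho\neq0$ in general. This explains the restriction to the pair $\theta\wedge\alpha_0$ and $\dx\alpha_0=\theta\wedge\alpha_1$.
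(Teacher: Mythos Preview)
Your argument is correct and follows the same route the paper sketches: comass one is a pointwise algebraic fact (immediate for $\theta\wedge\alpha_0=e^{012}$, and the $b_0=b_2=0,\ b_1=1$ point of Proposition~\ref{circleofcalibrations} for $\theta\wedge\alpha_1$), while closedness comes from the curvature-independent identities $\dx\alpha_0=\theta\wedge\alpha_1$ and $\dx\theta\wedge\alpha_0=0$ among \eqref{equacoesestruturaisgerais}. The paper itself gives only the one-line lead-in before the theorem and the remark ``Notice $\pm\theta\wedge\alpha_1$ is of the kind found in Proposition~\ref{circleofcalibrations}'' after it, so your write-up simply makes explicit what the paper leaves to the reader; the only slip is the closing sentence, where the description of the calibrated vector fields is handled by Lemma~\ref{Lemma_varphicalibratedvectorfield} rather than Proposition~\ref{Propfundamental}.
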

Notice $\pm\theta\wedge\alpha_1$ is of the kind found in Proposition \ref{circleofcalibrations}.

From Lemma \ref{Lemma_varphicalibratedvectorfield}, a vector field is a calibrated submanifold of $\theta\wedge\alpha_0$ if and only if it is parallel.

More important now is the solution of the optimal vector field problem on hyperbolic space.

 Consider the following model of hyperbolic metric on $M=\R^2\times\R_+$, with $a>0$ fixed, and constant sectional curvature $c=-a$:
 \[  \langle\ ,\ \rangle=\frac{1}{at^2}\bigl((\dx x^1)^2+(\dx x^2)^2+(\dx t)^2\bigr) .\]

\begin{coro}
For $c=-1$, the vector field $X\in\XIS^1_\Omega$ defined on a domain $\Omega\subset M$ by $X=t\partial_t$, has minimal volume
\[  \vol(X)=2\vol(\Omega)=-\int_{\partial\Omega}t^4\,\dx x^1\wedge\dx x^2  \]
among all vector fields in its homology class.
\end{coro}
\begin{proof}
 It is easy to see the Levi-Civita connection is given by
 \begin{equation*}
  \na_{\partial_i}\partial_j=\frac{1}{t}\delta_{ij}\partial_t ,\quad\quad
 \na_{\partial_i}\partial_t=\na_{\partial_t}\partial_i=-\frac{1}{t}\partial_i ,\quad\quad \na_{\partial_t}\partial_t=-\frac{1}{t}\partial_t , \qquad\forall i,j=1,2,
 \end{equation*}
 and to check that the metric has constant sectional curvature $-a$.

 Now we use the exact calibration $\varphi=-\theta\wedge\alpha_1$ and recall Lemma \ref{Lemma_varphicalibratedvectorfield} with $X=X_0=\sqrt{a}t\partial_t$. For this, we let $X_1=\sqrt{a}t\partial_1$, $X_2=\sqrt{a}t\partial_2$, composing the orthonormal frame. We find $\na_{\partial_t}t\partial_t=\partial_t-t\frac{1}{t}\partial_t=0$ and $\na_{\partial_i}t\partial_t=-t\frac{1}{t}\partial_i=-\partial_i$ ($i=1,2$). Hence, for the $A_{ij}=\langle\na_{X_i}X,X_j\rangle,\ i,j=0,1,2$, we obtain
 \[  A_{i0}=0,\qquad A_{0j}=\langle\na_XX,X_i\rangle=0,\qquad A_{ij}=\langle\na_{X_i}X,X_j\rangle=0,\ \ i\neq j ,\]
\[  A_{ii}=\langle\na_{X_i}X,X_i\rangle= a^{\frac{3}{2}}t^2\langle\na_{\partial_i}t\partial_t,\partial_i\rangle=-\sqrt{a},\ \ i=1,2. \]
Finally, equation \eqref{varphicalibratedvectorfield} reads
\[  -(-2\sqrt{a})=\sqrt{1+a+a+a^2} ,  \]
which has the desired solution with $a=1$. Moreover, in this case,
 \[ \vol(X)=\int_\Omega X^*\varphi=-\int_{\partial\Omega}X^*\alpha_0=
  -\int_{\partial\Omega}X\lrcorner\vol_M=-\int_{\partial\Omega}t^4\,\dx x^1\wedge\dx x^2,
 \]
since the volume form of $M$ is $t^3\,\dx x^1\dx x^2 \dx t$.
\end{proof}

Recalling the Introduction, we seem to have a one-time-deal too with $c=-a=-1$ as in \cite{GluckZiller} with $c=1$. The volume of $X$ as twice the volume of $\Omega$ was pointed in Section 3.2 for \textit{compact} $M$.

The last result is a bit curious since $X_1$ or $X_2$ have less volume than $X_0$ in any domain. In order to see this, let us reshape the indices:
\[   X=t\partial_1, \qquad Y=t\partial_2,\qquad Z=t\partial_t  .\]
Then $ \na_XX=t^2\na_{\partial_1}\partial_1=t\partial_t=Z,\
  \na_YX=t^2\na_{\partial_2}\partial_1=0,\
 \na_ZX=t\na_{\partial_t}t\partial_1=t\partial_1+t^2\na_{\partial_t}\partial_1= 0$.
And therefore, with the usual ordering,
$A_{00}=A_{01}=A_{1j}=A_{2j}=0,\ \forall j$, and only $A_{02}=1$. Whence $\vol(X)=\sqrt{2}\vol(\Omega)$. We note that no calibration studied in this article can deal with this vector field.

The conclusion therefore must be that the minimal vector field $t\partial_t$ and $t\partial_1$ have different homology representatives when restricted to the boundary.

We have not found in the literature any other explicit solution for minimal vector fields on hyperbolic metric. It should be an interesting problem concerning 3-manifold geometry.

\vspace*{11mm}


\vspace{7mm}

\begin{center}
 Statements and Declarations
\end{center}

\noindent Competing interests: The authors have no competing interests to declare that are relevant to the content of this article.

\vspace*{1mm}

\noindent Funding: The research leading to these results has received funding from Funda\c c\~ao para a Ci\^encia e a Tecnologia from the Portuguese Republic. Project Ref. UIDB/04674/2020. \url{https://doi.org/10.54499/UIDB/04674/2020}


\vspace*{13mm}

\textsc{R. Albuquerque}\ \ \ \textbar\ \ \ 
{\texttt{rpa@uevora.pt}}

Centro de Investiga\c c\~ao em Mate\-m\'a\-ti\-ca e Aplica\c c\~oes

Rua Rom\~ao Ramalho, 59, 671-7000 \'Evora, Portugal


\end{document}